\newtheorem{theorem}{Theorem}[section]
\newtheorem{notation}[theorem]{Notation}
\newtheorem{proposition}[theorem]{Proposition}
\newtheorem{lemma}[theorem]{Lemma}
\newtheorem{remark}[theorem]{Remark}
\theoremstyle{definition}
\newtheorem{definition}[theorem]{Definition}
\numberwithin{equation}{section}
\newcommand{\masha}[1]
{{\color{red} Masha says: #1}}
\newcommand{\marco}[1]
{{\color{blue} Marco says: #1}}
\newcommand{\R}{\mathbb{R}}  
\newcommand{\E}{\mathbb{E}} 
\newcommand{\Prob}{\mathbb{P}}
\newcommand{\Hei}{\mathbb{H}}
\DeclareMathOperator{\Span}{span}
\begin{document}

\title[Small deviations, Chung's LiL on Heisenberg group]{Small deviations and Chung's law of iterated logarithm for  a hypoelliptic Brownian motion on the Heisenberg group}

\author[Carfagnini]{Marco Carfagnini{$^{\dag }$}}
\thanks{\footnotemark {$\dag$} Research was supported in part by NSF Grants DMS-1712427 and DMS-1954264.}
\address{Department of Mathematics\\
University of Connecticut\\
Storrs, CT 06269,  U.S.A.}
\email{marco.carfagnini@uconn.edu}

\author[Gordina]{Maria Gordina{$^{\dag }$}}
\address{ Department of Mathematics\\
University of Connecticut\\
Storrs, CT 06269,  U.S.A.}
\email{maria.gordina@uconn.edu}

\keywords{Diffusion processes, Heisenberg group, functional limit laws}

\subjclass{Primary 60F17; Secondary  60F15, 60G51, 60J65}


\begin{abstract}
A small ball problem and Chung's law of iterated logarithm for a hypoelliptic Brownian motion in Heisenberg group are proven. In addition, bounds on the limit in Chung's law are established.
\end{abstract}

\maketitle
\tableofcontents

\section{Introduction}

Let $E$ be a topological space  and $\left\{ X_{t} \right\}_{0 \leqslant t \leqslant 1}$ be an $E$-valued stochastic process with continuous paths such that $X_{0} = x_{0} \in E$ a.s. Denote by $W_{x_0}\left( E \right)$ the space of $E$-valued continuous functions on $[0, 1]$ starting at $x_0$, then we can view $X_t$ as a $W_{x_0}\left( E \right)$-valued random variable. Given a norm $\Vert \cdot \Vert$ on  $W_{x_0}\left( E \right)$, the small ball problem for $X_{t}$ consists in finding the rate of explosion of
\[
-\log \Prob \left( \Vert X  \Vert < \varepsilon \right)
\]
as $\varepsilon \rightarrow 0$. More precisely, a process $X_{t}$ is said to satisfy a \emph{small deviation principle} with rates $\alpha$ and $\beta$ if there exist a constant $c>0$ such that

\begin{equation}\label{e.general.s.d}
\lim_{\varepsilon \rightarrow 0} -\varepsilon^\alpha \vert \log \varepsilon \vert^\beta \log \Prob \left( \Vert X \Vert < \varepsilon \right) =c.
\end{equation}
The values of $\alpha, \beta$ and $c$ depend on the process $X_{t}$ and on the chosen norm on $W_{x_0} \left( E \right)$.
Small deviation principles have many applications including metric entropy estimates and Chung's law of the iterated logarithm. We refer to the survey paper  \cite{LiShao2001} for more details. In our paper we are mostly interested in connections of a small deviation principle to Chung's law of the iterated logarithm.

We say that a process $X_{t}$  satisfies \emph{Chung's law of the iterated logarithm} with rate $a \in \R_+$ if there exists a constant $C$ such that

\begin{equation}\label{e.general.chung.}
\liminf_{t\rightarrow \infty} \left(\frac{\log \log t}{t} \right)^a \max_{0\leqslant s \leqslant t} \vert X_{s} \vert = C.
\end{equation}
When $X_{t}$ is a Brownian motion, it was proven in a famous paper by K.-L.~Chung in 1948 that \eqref{e.general.chung.} holds with $a = \frac{1}{2}$ and $C= \frac{\pi}{\sqrt{8}}$.
If $W_{x_0} \left( E \right)$ is a Banach space, and the law $\mu$ of $X_{t}$ is a Gaussian measure on $W_{x_0} \left( E \right)$, then one can use a scaling property of the process $X_{t}$ to prove Chung's law of the iterated logarithm from a small deviation principle.

Small deviation principle for a Brownian motion and related processes have been extensively studied, we mention only a few most relevant to our results. In \cite{BaldiRoynette1992b} the authors considered  the case of a one-dimensional Brownian motion and H\"{o}lder norms, in \cite{KuelbsLi1993a} a Brownian sheet in H\"{o}lder norms has been considered, \cite{KhoshnevisanShi1998} studied the integrated Brownian motion in the uniform norm, and \cite{ChenLi2003}  the m-fold integrated Brownian motion in both the uniform and $L^2$-norm. In \cite{Remillard1994} a small deviation principle and Chung's law of iterated logarithm is proven for some stochastic integrals and in particular for L\'{e}vy's stochastic  area.

In the current paper we consider a hypoelliptic Brownian motion $g_{t}$ on the Heisenberg group $\Hei$ starting at the identity $e$ in $ \Hei$. The group $\Hei$ is the simplest example of a sub-Riemannian manifold, and it comes with a natural left-invariant distance, the Carnot-Carath\'{e}odory distance $d_{cc}$. We then consider the uniform norm
\[
\Vert g \Vert_{W_0 \left( \Hei \right) }:= \max_{0 \leqslant t \leqslant 1} \vert g_{t} \vert
\]
on the path space $W_0 \left( \Hei \right) $ of $\Hei$-valued continuous curves starting at the identity, where $ \vert \cdot \vert$ is a norm on $\Hei$ equivalent to the  Carnot-Carath\'{e}odory distance $d_{cc}$. We refer for details to Section \ref{sec.2}.

Our main results include Theorem \ref{t.chunglil} where we prove Chung's law of the iterated logarithm with $a= \frac{1}{2}$ for a hypoelliptic Brownian motion $g_{t}$. As a consequence of Theorem \ref{t.chunglil}, we prove Theorem \ref{t.smalldeviations} which represents a small deviation principle for the hypoelliptic diffusion $g_{t}$ with respect to the norm $\Vert \cdot \Vert_{W_0 \left( \Hei \right)}$. More precisely, we prove that there exists a finite positive constant $c$ such that \eqref{e.general.s.d} holds with $\alpha = 2$ and $\beta=0$, and we provide a lower and upper bound on $c$. Note that finding the constant $c$ explicitly is difficult even in more studied cases, see for example \cite[Remark 2.2]{KhoshnevisanShi1998}.

Let us explain now how our setting differs from known results. First observe that the hypoelliptic Brownian motion $g_{t}$ is an $\mathbb{R}^{3}$-valued stochastic process, but it is not a Gaussian process. Therefore we can not rely on the properties of Gaussian measures on Banach spaces, such as log-concavity and Anderson's inequality which are common tools in the subject. We refer to \cite{AndersonTW1955, Borell1976a} for more details about Gaussian measures on Banach spaces. These properties have been used to show the existence of a small deviations principle for some processes such as an integrated Brownian motion in \cite{KhoshnevisanShi1998}, and a Brownian motion with values in a finite dimensional Banach space in  \cite{DeAcosta1983a}.

Generally, if a small deviations principle is known, then it can be used together with scaling properties of the process to show Chung's law of the iterated logarithm. For example, in \cite{Remillard1994}  a small deviation principle for L\'{e}vy's stochastic area $A_{t}$ is first proven and then, using that $A_{\varepsilon t}  \stackrel{(d)}{=} \varepsilon A_{t}$ for any $t$ and $\varepsilon >0$, Chung's law of the iterated logarithm for the process $A_{t}$ follows. For related work we also refer to \cite{DobbsMelcher2014}. It is also possible to prove the converse. In \cite{KhoshnevisanShi1998} the authors first prove Chung's law of the iterated logarithm for the integrated one-dimensional Brownian motion $\int_0^t b_{s}ds$. Then, using the scaling property $\int_0^{\varepsilon t} b_{s} ds  \stackrel{(d)}{=} \varepsilon^{\frac{3}{2}} \int_0^t b_{s}ds$, a small deviation principle for $\int_0^t b_{s}ds$ is shown.

Most relevant to our work is \cite{KhoshnevisanShi1998}, where  the existence of the limit \eqref{e.general.s.d} for $X_{t} := \int_0^t b_{s}ds$  follows from Anderson's inequality for Gaussian measures. Chung's law of the iterated logarithm is then used to prove that the limit is finite. This method can not be used directly in our setting since the hypoelliptic Brownian motion $g_{t}$ is not a Gaussian process, and therefore we can not rely on Anderson's inequality. In our case we first prove in Proposition \ref{p.smalldeviations.estimates} that if the limit \eqref{e.general.s.d} exists then it is strictly positive and finite. We then prove Chung's law of the iterated logarithm  for $g_{t}$ and use it in place of Anderson's inequality to show the existence of the limit \eqref{e.general.s.d}. As a by-product we have bounds on this limit in terms of the lowest Dirichlet eigenvalues as given in Theorem \ref{t.ChungBounds}. The mathematical literature on the subject is vast, and we mention only the most relevant in terms of the techniques and results. In particular, a similar state space is considered in \cite{Neuenschwander2014a, LiuJ2013} though the results are different.

The paper is organized as follows. In Section \ref{sec.2} we describe the Heisenberg group $\Hei$ and the corresponding sub-Laplacian and hypoelliptic Brownian motion. In Section \ref{sec.3} we state the main results of this paper, namely Chung's law of the iterated logarithm in Theorem \ref{t.chunglil} and a small deviation principle in Theorem \ref{t.smalldeviations}. Section \ref{sec.4} contains  estimates that are needed to prove Theorem  \ref{t.chunglil} and Theorem \ref{t.smalldeviations}. We conclude \ref{sec.5} with the proof of the main results.

\section{Hypoelliptic Brownian motion on the Heisenberg group}\label{sec.2}

\subsection{Heisenberg group as Lie group}
The Heisenberg group $\Hei$ as a set is  $\R^3\cong \mathbb{R}^{2} \times \mathbb{R}$ with the group multiplication  given by

\begin{align*}
& \left( \mathbf{v}_{1}, z_{1} \right) \cdot \left( \mathbf{v}_{2}, z_{2} \right) := \left( x_{1}+x_{2}, y_{1}+y_{2}, z_{1}+z_{2} + \frac{1}{2}\omega\left( \mathbf{v}_{1}, \mathbf{v}_{2} \right)\right),
\\
& \text{ where } \mathbf{v}_{1}=\left( x_{1}, y_{1} \right), \mathbf{v}_{2}=\left( x_{2}, y_{2} \right) \in \mathbb{R}^{2},
\\
& \omega: \mathbb{R}^{2} \times \mathbb{R}^{2} \longrightarrow \mathbb{R},
\\
& \omega\left( \mathbf{v}_{1}, \mathbf{v}_{2} \right):= x_{1}y_{2}-x_{2} y_{1}
\end{align*}
is the standard symplectic form on $\mathbb{R}^{2}$. The identity in $\Hei$ is $e=(0, 0, 0)$ and the inverse is given by $\left( \mathbf{v}, z \right)^{-1}= (-\mathbf{v},-z)$.

The Lie algebra of $\Hei$ can be identified with the space  $\R^3\cong \mathbb{R}^{2} \times \mathbb{R}$  with the Lie bracket defined by
\[
\left[ \left( \mathbf{a}_{1}, c_{1} \right), \left( \mathbf{a}_{2}, c_{2} \right)  \right] = \left(0,\omega\left( \mathbf{a}_{1}, \mathbf{a}_{2} \right)  \right).
\]
The set $\R^3\cong \mathbb{R}^{2} \times \mathbb{R}$ with this Lie algebra structure will be denoted by $\mathfrak{h} $.

Let us now recall some basic notation for Lie groups. Suppose $G$ is a Lie group, then the left  and right multiplication by an element $k\in G$ are denoted by
\begin{align*}
L_{k}: G \longrightarrow G, &  & g \longmapsto k^{-1}g,
\\
R_{k}: G \longrightarrow G, &  & g \longmapsto gk.
\end{align*}

Recall that  the tangent space $T_{e}G$ can be identified with the Lie algebra $\mathfrak{g}$ of left-invariant vector fields on $G$, that is, vector fields $X$ on $G$  such that $dL_{k} \circ X=X \circ L_{k}$, where $dL_{k}$ is the differential of $L_k$. More precisely, if $A$ is a vector in $T_{e}G$, then we denote by $\tilde{A}\in \mathfrak{g}$ the (unique) left-invariant vector field such that $\tilde{A} (e) = A$.  A left-invariant vector field is determined by its value at the identity, namely, $\tilde{A}\left( k \right)=dL_{k} \circ\tilde{A}\left( e \right)$.

For the Heisenberg group the differential of left and right multiplication can be described explicitly as follows.
\begin{proposition}\label{p.Differentials}
Let $k= (k_1, k_2, k_3) = (\mathbf{k}, k_3 )$ and $g= (g_1, g_2, g_3) = (\mathbf{g}, g_3 )$ be two elements in $\Hei$. Then, for every $v= \left( v_1, v_2, v_3 \right) = (\mathbf{v}, v_3 )$ in  $T_g\Hei$, the differentials (pushforward) of the left and right multiplication are given by
\begin{align}\label{LeftRightMultDiff}
& dL_{k}=L_{k \ast}: T_g\Hei \longrightarrow T_{k^{-1}g}\Hei,  \notag
\\
& dR_{k}=R_{k \ast}: T_g\Hei \longrightarrow T_{gk}\Hei, \notag
\\
& dL_{k} (v) =  \left( v_1, v_2, v_3 + \frac{1}{2} \omega( \mathbf{v}, \mathbf{k}) \right), \notag \\
& dR_{k} (v) =  \left( v_1, v_2, v_3 + \frac{1}{2} \omega( \mathbf{v}, \mathbf{k}) \right).
\end{align}
\end{proposition}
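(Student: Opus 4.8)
The plan is to exploit the fact that the group law on $\Hei$ is polynomial, and its only nonlinear term is the bilinear symplectic form $\omega$; hence for each fixed $k$ both $L_k$ and $R_k$ are \emph{affine} maps of $\R^3$. An affine map has constant differential equal to its linear part, so $dL_k$ and $dR_k$ are independent of the base point and can be computed by simply writing $L_k(g)$ and $R_k(g)$ in coordinates and extracting the terms linear in $g$.

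First I would treat $L_k$. Using $k^{-1} = (-\mathbf{k}, -k_3)$ and the multiplication rule,
\[
L_k(g) = k^{-1} g = \bigl( \mathbf{g} - \mathbf{k},\ g_3 - k_3 + \tfrac{1}{2}\omega(-\mathbf{k}, \mathbf{g}) \bigr) = \bigl( g_1 - k_1,\ g_2 - k_2,\ g_3 - k_3 - \tfrac{1}{2}(k_1 g_2 - k_2 g_1) \bigr).
\]
The entries $-\mathbf{k}$ and $-k_3$ are constant in $g$ and disappear under differentiation, so the Jacobian of $L_k$ in the variables $(g_1, g_2, g_3)$ has rows $(1,0,0)$, $(0,1,0)$, $(\tfrac{1}{2}k_2, -\tfrac{1}{2}k_1, 1)$. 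Applying this matrix to $v = (\mathbf{v}, v_3) \in T_g\Hei$ and using $k_2 v_1 - k_1 v_2 = \omega(\mathbf{v}, \mathbf{k})$ gives $dL_k(v) = (v_1, v_2, v_3 + \tfrac{1}{2}\omega(\mathbf{v}, \mathbf{k}))$, which sits in $T_{k^{-1}g}\Hei$ as required.

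The computation for $R_k$ is the same: $R_k(g) = gk = (\mathbf{g} + \mathbf{k},\ g_3 + k_3 + \tfrac{1}{2}\omega(\mathbf{g}, \mathbf{k}))$, and the part linear in $g$ has exactly the Jacobian above, so $dR_k(v) = (v_1, v_2, v_3 + \tfrac{1}{2}\omega(\mathbf{v}, \mathbf{k}))$ too. The agreement of the two differentials is explained by antisymmetry of $\omega$: the sign flip introduced by passing to $k^{-1}$ in $L_k$ is exactly undone by $\omega(-\mathbf{k}, \mathbf{g}) = \omega(\mathbf{g}, \mathbf{k})$, and any remaining left/right discrepancy on this $2$-step nilpotent group lives in the center and does not show up in the linear part in the central direction. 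There is no genuine obstacle in this proof — it is a short coordinate computation — so the only care needed is to respect the paper's convention $L_k(g) = k^{-1}g$ and the sign in $\omega(\mathbf{v}_1, \mathbf{v}_2) = x_1 y_2 - x_2 y_1$, which I would double-check at the end.
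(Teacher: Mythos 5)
Your computation is correct: the paper states this proposition without proof, treating it as an elementary coordinate calculation, and your argument (both $L_k$ and $R_k$ are affine in $g$, so the differential is the linear part, with the sign from $k^{-1}$ cancelled by antisymmetry of $\omega$) is exactly the intended one, with the paper's convention $L_k(g)=k^{-1}g$ and the form $\omega(\mathbf{v}_1,\mathbf{v}_2)=x_1y_2-x_2y_1$ handled correctly.
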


\subsection{Heisenberg group as a sub-Riemannian manifold}
The Heisenberg group $\Hei$ is the simplest non-trivial example of a sub-Riemannian manifold.
We define $X$, $Y$ and $Z$ as the unique left-invariant vector fields satisfying $X_e = \partial_x$, $Y_e = \partial_y$ and $Z_e = \partial_z$ which are given by

\begin{align*}
& X = \partial_x - \frac{1}{2}y\partial_z,
\\
& Y = \partial_y + \frac{1}{2}x\partial_z,
\\
&  Z = \partial_z.
\end{align*}
Note that the only non-zero Lie bracket for these left-invariant vector fields is $[X, Y]=Z$, so the vector fields $\left\{ X, Y \right\}$ satisfy H\"{o}rmander's condition. We define the \emph{horizontal distribution} as $\mathcal{H}:= \Span \left\{  X, Y \right\}$ fiberwise, thus making $\mathcal{H}$ a sub-bundle in the tangent bundle $T\Hei$. To finish the description of the Heisenberg group as a sub-Riemannian manifold we need to equip the horizontal distribution  $\mathcal{H}$ with an inner product. For any $p \in \Hei$ we define the inner product $\langle \cdot , \cdot \rangle_{\mathcal{H}_{p}}$ on $\mathcal{H}_{p}$ so that $\left\{ X \left( p \right), Y \left( p \right) \right\}$ is an orthonormal (horizontal) frame at any $p \in \Hei$. Vectors in $\mathcal{H}_{p}$ will be called \emph{horizontal}, and the corresponding norm is denoted by $\Vert \cdot \Vert_{\mathcal{H}_{p}}$.

In addition, H\"{o}rmander's condition ensures that a natural sub-Laplacian on the Heisenberg group

\begin{equation}\label{e.2.1}
\Delta_{\mathcal{H}} =  X ^2 + Y ^2
\end{equation}
is a hypoelliptic operator by \cite{Hormander1967a}.

We recall now another notion in sub-Riemannian geometry, namely, of \emph{horizontal curves}. Suppose $\gamma(t) = \left( x\left( t \right), y\left( t \right), z\left( t \right) \right)=\left( \mathbf{x}\left( t \right), z\left( t \right) \right)$  is an absolutely continuous curve with values in $\Hei$, and  the corresponding tangent vector $\gamma^\prime(t)$ in $T\Hei_{\gamma(t)}$ is
\[
\gamma^\prime (t) = \left( x^\prime\left( t \right), y^\prime\left( t \right), z^\prime\left( t \right) \right)=\left( \bm{x}^\prime\left( t \right), z^\prime\left( t \right) \right).
\]
We denote by  $c_{g}$ the \emph{Maurer–Cartan form} on $\Hei$, i.e. the $\mathfrak{h}$-valued $1$-form on $\Hei$ defined by $c_{g}\left( v \right)=dL_{g} v$, $v \in T_{g}\Hei$. Note that the  pushforward of a vector in $T_{g}\Hei$ along the left translation can be found explicitly. Namely for $\gamma(t) = \left( \mathbf{x}\left( t \right), z\left( t \right) \right)$ the Maurer-Cartan form is
\begin{align}\label{e.MaurerCartan}
& c_{\gamma}\left( t \right)=c\left( t \right)=dL_{\gamma\left( t \right)}\left( \gamma^{\prime}(t)\right)
\\
& =\left( \mathbf{x}^\prime\left( t\right), z^{\prime}\left( t \right) -\frac{1}{2}\omega( \mathbf{x}\left( t \right), \mathbf{x}^{\prime}\left( t \right) )\right), \notag
\end{align}
where we used Proposition \ref{p.Differentials}.

\begin{definition}\label{Dfn.2.1}
An absolutely continuous curve $t \longmapsto \gamma (t) \in \Hei, t \in [0,1]$ is said to be \emph{horizontal} if $\gamma^{\prime}(t)\in\mathcal{H}_{\gamma(t)}$ for a.e. $t$, that is, the tangent vector to $\gamma\left(t\right)$ is horizontal for a.e. $t$. Equivalently we can say that $\gamma$ is horizontal if  $c_{\gamma}\left( t \right) \in \mathcal{H}_{e}$ for a.e. $t$.
\end{definition}
Equation \eqref{e.MaurerCartan} can be used  to characterize horizontal curves in terms of the components as follows. An absolutely continuous curve $\gamma$ is horizontal if and only if

\begin{equation}\label{e.horizontal}
z^{\prime}(t) -\frac{1}{2}\omega( \mathbf{x}\left( t \right), \mathbf{x}^{\prime}\left( t \right) ))=0 \text{ a.e. } t.
\end{equation}
The \emph{horizontal length} is defined as

\[
L_{\mathcal{H}}\left( \gamma \right):=\int_{0}^{1} \vert c_\gamma \left( s \right) \vert_{\mathcal{H}_e} ds,
\]
where we set $L_{\mathcal{H}}\left( \gamma \right)=\infty$ if $\gamma$ is not horizontal. The Heisenberg group as a sub-Riemannian manifold comes with a natural left-invariant distance.

\begin{definition}\label{Dfn.2.2}
For any $g_{1}, g_{2} \in \Hei$ the \emph{Carnot-Carath\'{e}odory distance} is defined as
\begin{align*}
& d_{cc} (g_{1}, g_{2}):= \inf \left\{  L\left( \gamma \right),  \gamma : [0,1] \longrightarrow \Hei, \gamma(0)=g_{1}, \gamma(1)=g_{2} \right\}.
\end{align*}
\end{definition}
Another consequence of  H\"{o}rmander's condition for left-invariant vector fields $X$, $Y$ and $Z$ is that by the Chow–Rashevskii theorem there exists a horizontal curve connecting any two points in $\Hei$, and therefore the Carnot-Carath\'{e}odory distance is finite on $\Hei$.

In addition to the Carnot-Carath\'{e}odory distance on the Heisenberg group,  we will use  the following homogeneous distance
\begin{equation}\label{hom.norm}
\rho( g_{1}, g_{2}) := \left(  \Vert \mathbf{x}_{1}-  \mathbf{x}_{2}\Vert^{4}_{\mathbb{R}^{2}}+ \vert z_{1}-z_{2} + \omega(\mathbf{x}_{1}, \mathbf{x}_{2}) \vert^2 \right)^{\frac{1}{4}},
\end{equation}
which is equivalent to the Carnot-Carath\'{e}odory distance, that is, there exist  two positive constants $c$ and $C$ such that
\begin{equation}\label{e.DistEquivalence}
c \rho( g_{1}, g_{2}) \leqslant d_{cc}( g_{1}, g_{2} ) \leqslant C \rho( g_{1}, g_{2})
\end{equation}
for all $g_{1}, g_{2} \in \Hei$. We denote by $\vert \cdot \vert$ the norm on $\Hei$ induced by $\rho$, that is, $\vert g \vert = \rho ( g, e )$ for all $g\in \Hei$. In particular, by the left-invariance of $\rho$ we have that for any $g_{1}, g_{2} \in \Hei$
\begin{equation}\label{e.triangular.ineq}
\vert g_{2}^{-1} g_{1} \vert = \rho\left( g_{2}^{-1} g_{1}, e \right) = \rho \left( g_{1}, g_{2} \right) \leqslant \rho \left( g_{1},e \right) +  \rho \left( g_{2},e \right) = \vert g_{1} \vert +\vert g_{2} \vert.
\end{equation}
This is discussed in a more general setting in \cite[Proposition 5.1.4]{BonfiglioliLanconelliUguzzoniBook}.

Finally, we need to describe  a hypoelliptic Brownian motion with values in $\Hei$. This is a stochastic process whose generator is the sub-Laplacian $\frac{1}{2}\Delta_{\mathcal{H}}$ defined by Equation \eqref{e.2.1}.

\begin{notation}\rm\label{ProbabilisticSetting}
Throughout the paper we use the following notation. Let $\left( \Omega, \mathcal{F}, \mathcal{F}_{t}, \mathbb{P}\right)$ be a filtered probability space. We denote the expectation under $\Prob$  by $\E$.

By a standard real-valued Brownian motion $\left\{ B_{t} \right\}_{t \geqslant 0}$ we mean a continuous adapted $\mathbb{R}$-valued stochastic process on $\left( \Omega, \mathcal{F}, \mathcal{F}_{t}, \mathbb{P}\right)$ such that for all $0 \leqslant s \leqslant t$ the increment $B_{t}-B_{s}$ is independent of $\mathcal{F}_{s}$ and has a normal distribution with mean $0$ and the variance $t-s$.

\end{notation}

\begin{definition}\label{d.HeisenbergBM} Let $W_{t}= \left( W_1(t), W_2(t), 0 \right)$ be an $\mathfrak{h}$-valued stochastic process, where $\bm{W}_{t}:=\left( W_1(t), W_2(t)\right)$ is a standard two-dimensional Brownian motion. A hypoelliptic Brownian motion $g_{t} = \left( g_1(t), g_2(t), g_3(t)\right)$ on $\Hei$ is the continuous $\Hei$-valued process defined by

\begin{equation}\label{e.HypoBM}
g_{t}:=\left( \bm{W}_{t}, A_{t}\right),
\end{equation}
where $A_{t} := \frac{1}{2} \int_0^t \omega\left( \bm{W}_{s}, d\bm{W}_{s}\right)$ is L\'{e}vy's stochastic area.
\end{definition}
Note that we used It\^{o}'s integral in this definition rather than Stratonovich' integral. However, these two integrals are equal in our setting since the symplectic form $\omega$ is skew-symmetric, and therefore  L\'{e}vy's stochastic area functional is the same for both integrals as was observed in \cite[Remark 4.3]{DriverGordina2008}.

One can also write a stochastic differential equation for $g_{t}=\left( x_{t}, y_{t}, z_{t} \right)$, $g_{0}=\left(0, 0, 0\right)=e \in \Hei$ as a stochastic differential equation for a Lie group-valued Brownian motion

\begin{align}\label{e.SDE}
& L_{g_{t} \ast}\left( dg_{t} \right)=g_{t}^{-1}dg_{t} =dW_{t},
\\
& g_{0}=e. \notag
\end{align}
Equation \eqref{e.HypoBM} gives an explicit solution to this stochastic differential equation for the Heisenberg group.

\section{Main results}\label{sec.3}

\begin{notation}
Let $X_{t}$ be a stochastic process with values in a metric space $\left( \mathfrak{X}, d \right)$ with $X_{0}=x \in \mathfrak{X}$, then  $X^{\ast}_{t}$ denotes the process defined by

\begin{align*}
X^{\ast}_{t}:= \max_{0\leqslant s \leqslant t} d\left( X_{s}, X_{0} \right).
\end{align*}
\end{notation}
For $\mathfrak{X}=\Hei$  we use the homogeneous distance $\rho$ with $X_{0}=e$, and on $\mathfrak{X}=\R^n$ we consider the standard Euclidean norm. Before formulating Chung's law of iterated logarithm for the hypoelliptic Brownian motion $g_{t}$ we introduce the notation

\begin{align*}
\phi\left( t \right):= \sqrt{ \frac{\log \log t}{t}}.
\end{align*}

\begin{theorem}[Chung's law of iterated logarithm]\label{t.chunglil}
Let $g_{t}$ be the hypoelliptic Brownian motion on the Heisenberg group $\Hei$ defined by \eqref{e.HypoBM}. Then there exists a constant $c \in ( 0, \infty )$ such that
\begin{equation}\label{e.chunglil}
\liminf_{t \rightarrow \infty} \phi\left( t \right) g_{t}^{\ast}=c \hskip0.1in \text{ a.s. }
\end{equation}
\end{theorem}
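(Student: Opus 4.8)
The plan is to establish \eqref{e.chunglil} via a $0$-$1$ law together with matching upper and lower bounds on the $\liminf$, using the scaling structure of the hypoelliptic Brownian motion. The first observation is that the dilations $\delta_\lambda(\mathbf{v},z) = (\lambda \mathbf{v}, \lambda^2 z)$ are Lie group automorphisms of $\Hei$ under which $\rho$ is homogeneous of degree one, and under which $g_t$ scales: $(\delta_{1/\sqrt{\lambda}} g_{\lambda t})_{t\geqslant 0} \stackrel{(d)}{=} (g_t)_{t\geqslant 0}$, since $\bm{W}_{\lambda t}/\sqrt{\lambda} \stackrel{(d)}{=} \bm{W}_t$ and L\'evy's area scales like $\lambda \cdot (1/\sqrt\lambda)^2$ relative to the Brownian increments. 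Consequently $\phi(t) g_t^\ast$ has, along a geometric subsequence $t_n = \theta^n$, the same law as a fixed functional of the path on $[0,1]$ up to the slowly varying factor $\sqrt{\log\log t_n}$. The event $\{\liminf_{t\to\infty}\phi(t) g_t^\ast = c\}$ is a tail event for the Brownian filtration — more precisely, $\liminf_{t\to\infty}\phi(t)g_t^\ast$ is measurable with respect to $\bigcap_{T}\sigma(\bm{W}_t - \bm{W}_T : t \geqslant T)$ after absorbing the finite-time contribution, which has probability $0$ — so by Blumenthal/Kolmogorov it is a.s. equal to a deterministic constant $c \in [0,\infty]$. This reduces the theorem to showing $0 < c < \infty$.

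For the \emph{upper bound} $c < \infty$: along the subsequence $t_n = \theta^n$, the small-ball probability $\Prob(g_1^\ast < \varepsilon)$ is strictly positive for every $\varepsilon > 0$ (the hypoelliptic diffusion has full support, and indeed Proposition \ref{p.smalldeviations.estimates}-type estimates give $-\log\Prob(g_1^\ast<\varepsilon)\asymp \varepsilon^{-2}$). Taking $\varepsilon = \varepsilon_n$ comparable to $\phi(t_n)^{-1}\cdot$const $= \sqrt{t_n/\log\log t_n}\cdot$const, the events $\{g_{t_n}^\ast < \varepsilon_n \sqrt{t_n}\}$ — which by scaling have probability $\Prob(g_1^\ast < \varepsilon_n)$ decaying like $\exp(-C\log\log t_n) = (\log t_n)^{-C}$ — are not summable in $n$ when the constant in $\varepsilon_n$ is chosen large enough that $C < 1$. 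One cannot apply Borel--Cantelli directly because of dependence, but a standard blocking argument (using the independence of increments over disjoint time intervals $[t_{n-1}, t_n]$ together with a uniform estimate controlling the displacement accumulated on $[0,t_{n-1}]$, via the triangle inequality \eqref{e.triangular.ineq} and the law of large numbers bound $g_{t_{n-1}}^\ast = o(\sqrt{t_n})$) shows that infinitely often $g_{t_n}^\ast < $ const $\cdot \phi(t_n)^{-1}$, hence $c < \infty$.

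For the \emph{lower bound} $c > 0$: here I would show that for a sufficiently small constant $\kappa$, the events $\{g_{t_n}^\ast < \kappa\, \phi(t_n)^{-1}\}$ happen only finitely often. Along $t_n = \theta^n$ each such event has probability $\asymp \exp(-C'(\kappa)\log\log t_n)$ with $C'(\kappa)\to\infty$ as $\kappa \to 0$ (again from the $\varepsilon^{-2}$ small-ball rate), so for $\kappa$ small these probabilities are summable and Borel--Cantelli gives that a.s. $g_{t_n}^\ast \geqslant \kappa\phi(t_n)^{-1}$ eventually. The subtlety is interpolating between consecutive scales $t_{n}$ and $t_{n+1}$: since $t\mapsto g_t^\ast$ is nondecreasing and $\phi$ is regularly varying, for $t\in[t_n,t_{n+1}]$ one has $\phi(t)g_t^\ast \geqslant \phi(t_{n+1})g_{t_n}^\ast \geqslant (\phi(t_{n+1})/\phi(t_n))\kappa\phi(t_n)^{-1}\cdot\phi(t_n) $, and the ratio $\phi(t_{n+1})/\phi(t_n)\to \theta^{-1/2}$, so taking $\theta \downarrow 1$ recovers the full $\liminf$ over continuous $t$ up to an arbitrarily small loss. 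I expect the \textbf{main obstacle} to be the Borel--Cantelli / blocking argument in the upper bound: establishing the non-summable-lower-bound direction requires genuine independence, which forces one to replace $g_{t_n}^\ast$ by the increment of $g$ over $[t_{n-1},t_n]$ (in the group sense, $g_{t_{n-1}}^{-1}g_{t_n}$), control the error term $|g_{t_{n-1}}|$ uniformly, and verify that the small-ball lower bound $\Prob(g_1^\ast < \varepsilon)\geqslant \exp(-C\varepsilon^{-2})$ has a constant $C$ compatible with the $\theta$ and $\varepsilon_n$ chosen — this is exactly where Proposition \ref{p.smalldeviations.estimates} and the two-sided small-ball estimates of Section \ref{sec.4} enter, and it is the step that genuinely uses the non-Gaussian structure of $g_t$ rather than soft arguments.
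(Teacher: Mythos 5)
Your overall strategy coincides with the paper's: a zero--one law reduces \eqref{e.chunglil} to showing $0<c<\infty$, and the two bounds are obtained from the two-sided small-ball estimates of Proposition \ref{p.smalldeviations.estimates} via Borel--Cantelli along a subsequence, with a group-increment blocking argument for the divergent direction; your lower-bound interpolation is essentially Proposition \ref{p.chunglil.lowerbound}. The genuine gap is in your upper bound: with a geometric subsequence $t_n=\theta^n$ the blocking argument fails, because the past contribution is \emph{not} negligible at the relevant scale. Indeed $g^{\ast}_{t_{n-1}}$ is typically of order $\sqrt{t_{n-1}}=\theta^{-1/2}\sqrt{t_n}$, so $\phi(t_n)\,g^{\ast}_{t_{n-1}}\asymp \theta^{-1/2}\sqrt{\log\log t_n}\to\infty$; your claimed bound $g^{\ast}_{t_{n-1}}=o(\sqrt{t_n})$ is false for fixed $\theta$, and even if it held it would be insufficient, since what the triangle inequality \eqref{e.triangular.ineq} requires is $g^{\ast}_{t_{n-1}}=o\bigl(\sqrt{t_n/\log\log t_n}\bigr)$. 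This is exactly why the paper runs Proposition \ref{p.chunglil.upperbound} along the superexponential sequence $t_n=n^n$ and proves Lemma \ref{l.techincal} ($\phi(t_n)g^{\ast}_{t_{n-1}}\to 0$ a.s.), which is not a soft law-of-large-numbers fact: it uses the ``other LIL'' of Jain--Pruitt \cite{JainPruitt1975} for $\max_{0\leqslant s\leqslant t_{n-1}}\vert B_s\vert$ and Baldi's LIL \cite{Baldi1986a} for the L\'evy area. Note that $t_n=n^n$ still gives $\log\log t_n\sim\log n$, so the series $\sum_n \exp(-C\log\log t_n)$ with $C<1$ still diverges and nothing is lost on the Borel--Cantelli side; with your choice of $t_n$ the divergence also holds, but the conclusion $c<\infty$ cannot be extracted because the error term swamps the scale $\phi(t_n)^{-1}$.

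A secondary, fixable omission is the zero--one law itself. Asserting tail measurability ``after absorbing the finite-time contribution'' begs the question in the Heisenberg setting: the area satisfies $A_s=A_u+\frac12\int_u^s\omega(B_v,dB_v)$, and after truncating at time $u$ the correction terms are of the form $A_u^2+2\vert A_u\vert\max_{0\leqslant s\leqslant t}\vert A_s\vert$, whose product with $\phi(t)^4$ does \emph{not} vanish uniformly in $t$ (the limsup of $\phi(t)^2\max_{0\leqslant s\leqslant t}\vert A_s\vert$ is infinite). Showing that these corrections do not affect the liminf is the actual content of Proposition \ref{p.zero-one.law}, via the two-sided comparison \eqref{e.idk2} together with R\'emillard's Chung-type LIL for the stochastic area \cite{Remillard1994}. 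So your outline identifies the right ingredients, but both the choice of blocking sequence in the upper bound and the tail-triviality step need the quantitative arguments the paper supplies.
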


\begin{remark}\label{r.scaling}
Note that the hypoelliptic Brownian motion $g_{t}$ has the same scaling property with respect to the norm induced by the homogeneous norm $\rho$ as a standard Brownian motion in a Euclidean space. Indeed,

\begin{align*}
& \vert g_{\varepsilon t} \vert := \rho \left( g_{t \varepsilon}, e \right)= \sqrt[4]{ \vert B_{t \varepsilon } \vert^4 + A_{t \varepsilon}^2 }
\\
& \stackrel{(d)}{=} \sqrt[4]{ \vert \sqrt{\varepsilon} B_{t } \vert^4 +  \left( \varepsilon A_{t } \right) ^2 } = \sqrt{\varepsilon} \rho \left( g_{t}, e \right) = \sqrt{\varepsilon} \vert g_{t} \vert.
\end{align*}
Therefore it is not surprising that the process $g_{t}$ and the standard Brownian motion have the same rate $\phi (t)$ in  Chung's law of iterated logarithm.
\end{remark}
As a consequence of Theorem \ref{t.chunglil} we can prove a small deviation principle for $g_{t}$.

\begin{theorem}[Small deviation principle]\label{t.smalldeviations}
The limit
\begin{equation}\label{e.smalldeviations}
\lim_{\varepsilon \rightarrow 0 } -  \varepsilon^2 \log \Prob \left( g^{\ast}_{1} < \varepsilon \right)=c^{2}
\end{equation}
exists with constant $c$ being defined by \eqref{e.chunglil}.

\end{theorem}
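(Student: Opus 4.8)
The plan is to pass from the radius scale to the time scale and read off the constant from Chung's law \eqref{e.chunglil}. Writing $f(\varepsilon):=-\log\Prob(g_1^\ast<\varepsilon)$, the scaling property of Remark \ref{r.scaling} gives $g_t^\ast\stackrel{(d)}{=}\sqrt t\,g_1^\ast$ for every $t>0$, hence
\[
\Prob(g_t^\ast<r)=\Prob\bigl(g_1^\ast<r/\sqrt t\bigr),\qquad r,t>0,
\]
so $f$ is finite and non-increasing, and \eqref{e.smalldeviations} is exactly the assertion that $\varepsilon^2f(\varepsilon)\to c^2$ as $\varepsilon\to0$. By Proposition \ref{p.smalldeviations.estimates} I may take for granted at the outset that $0<\kappa_-\le\kappa_+<\infty$, where $\kappa_-:=\liminf_{\varepsilon\to0}\varepsilon^2f(\varepsilon)$ and $\kappa_+:=\limsup_{\varepsilon\to0}\varepsilon^2f(\varepsilon)$, so that it remains to show $\kappa_-=\kappa_+=c^2$.

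The bridge between the two scales is the identity, valid for $a>0$ and $t_n=\theta^n$ with $\theta>1$,
\[
\Prob\bigl(\phi(t_n)\,g_{t_n}^\ast<a\bigr)=\Prob\bigl(g_1^\ast<a/\sqrt{\log\log t_n}\bigr)=\exp\bigl(-f(\varepsilon_n)\bigr),\qquad \varepsilon_n:=\frac{a}{\sqrt{\log\log t_n}},
\]
where $\varepsilon_n\downarrow0$, $\varepsilon_n/\varepsilon_{n+1}\to1$ and $\log\log t_n\sim\log n$. First I would dispose of the easy half $\kappa_-\le c^2$: for $b<\sqrt{\kappa_-}$ one has $f(\varepsilon_n)\ge(1-o(1))\kappa_-\log n/b^2$, so $\sum_n\Prob(\phi(t_n)g_{t_n}^\ast<b)\le\sum_n n^{-(1-o(1))\kappa_-/b^2}<\infty$; the first Borel--Cantelli lemma and the elementary gap bound $\phi(t)g_t^\ast\ge\phi(t_{n+1})g_{t_n}^\ast$ on $[t_n,t_{n+1}]$ (monotonicity of $g_\cdot^\ast$ and eventual monotonicity of $\phi$) give $\liminf_{t\to\infty}\phi(t)g_t^\ast\ge b/\sqrt\theta$ a.s.; letting $\theta\downarrow1$, $b\uparrow\sqrt{\kappa_-}$ and invoking \eqref{e.chunglil} yields $\kappa_-\le c^2$.

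The substance is the complementary pair of bounds $\kappa_+\le c^2$ and $\kappa_-\ge c^2$; together with $\kappa_-\le\kappa_+$ these force $\kappa_-=\kappa_+=c^2$, which is \eqref{e.smalldeviations}. Proving them calls for the second Borel--Cantelli lemma, so one must replace the events $\{g_{t_n}^\ast<\cdot\}$, which depend on the whole path on $[0,t_n]$, by independent events depending only on the increments over disjoint blocks $[t_{n-1},t_n]$. The hard part is precisely this decoupling: because $g_t$ is a nonlinear functional of the underlying Gaussian process its law is not Gaussian, so there is no Gaussian correlation (Anderson) inequality to make the decoupling free. My plan is to use instead the Markov property --- the shifted process $u\mapsto g_{t_{n-1}}^{-1}g_{t_{n-1}+u}$ is a hypoelliptic Brownian motion started at $e$, independent of $\mathcal{F}_{t_{n-1}}$ --- together with the homogeneous triangle inequality \eqref{e.triangular.ineq}, which sandwiches $|g_u|$ for $u\in[t_{n-1},t_n]$ between $|g_{t_{n-1}}^{-1}g_u|\pm|g_{t_{n-1}}|$; the error term $|g_{t_{n-1}}|$ is then controlled by choosing the sequence $t_n$ to grow fast enough that the "old'' part of the path is negligible on the scale $a/\phi(t_n)$, the residual losses being absorbed using the a priori two-sided estimates of Proposition \ref{p.smalldeviations.estimates} and the estimates behind Chung's law gathered in Section \ref{sec.4}. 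This is the step that plays, in the present non-Gaussian setting, the role that Anderson's inequality plays classically. Once the decoupled events are genuinely independent, $\liminf_{t\to\infty}\phi(t)g_t^\ast$ is read off from a one-sided Borel--Cantelli criterion for independent variables, and matching it against \eqref{e.chunglil} gives the two remaining inequalities.
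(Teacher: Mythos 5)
Your reduction is the right one---given Theorem \ref{t.chunglil} it suffices to prove $\kappa_+\le c^2$ and $c^2\le \kappa_-$---and your ``easy half'' is a correct first Borel--Cantelli argument (it is the paper's first step run with the liminf constant instead of the limsup constant, and it is in any case subsumed by $\kappa_+\le c^2$). But the two inequalities that carry the entire content of the theorem are only announced, not proved, and the plan you give for them misallocates the tools. The bound $\kappa_+\le c^2$ does not call for the second Borel--Cantelli lemma or for any decoupling: in the paper it is obtained by exactly the scheme of your easy half, namely the small-ball upper bound \eqref{e.upperbound} with $k$ close to $c_+$, summability of $\Prob\left( g^{\ast}_{t_n}<\sqrt{\gamma}/\phi(t_{n+1})\right)$ along $t_n=R^n$, the monotonicity interpolation between consecutive geometric times, and then $R\downarrow 1$. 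Only $c^2\le \kappa_-$ uses independence: one takes $t_n=n^n$, builds the events $E_n^k$ from the left increments $g_{t_{n-1}}^{-1}g_{s+t_{n-1}}$ (equal in law to $g_s$ and independent of $\mathcal{F}_{t_{n-1}}$ by \eqref{e.LeftIncrement}), shows $\sum_n \Prob\left(E_n^k\right)=\infty$ for every $k>c_-$ via \eqref{e.lowerbound} with the explicit radii $\varepsilon_n\asymp 1/\left(\sqrt{t_n-t_{n-1}}\,\phi(t_n)\right)$, and applies the second Borel--Cantelli lemma; your proposal names none of these choices nor verifies any convergence/divergence of series (the role of Proposition \ref{p.general}).

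The step you describe as ``choose $t_n$ to grow fast enough that the old part of the path is negligible'' is precisely the missing quantitative ingredient, and it is not supplied by Proposition \ref{p.smalldeviations.estimates} or by anything in Section \ref{sec.4}: what is needed is $\phi(t_n)\,g^{\ast}_{t_{n-1}}\to 0$ a.s.\ for $t_n=n^n$, which is Lemma \ref{l.techincal}. Its proof is not a routine scaling estimate, because the L\'evy area part $A^{\ast}_{t_{n-1}}$ is of order $t_{n-1}\log\log t_{n-1}$; the paper controls it with Baldi's functional LIL for the stochastic area together with the Jain--Pruitt ``other LIL'' for the planar Brownian part, and then checks $\phi(t_n)^2\, t_{n-1}\log\log t_{n-1}\to 0$. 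Without this lemma your triangle-inequality sandwich $\vert g_u\vert \le \vert g_{t_{n-1}}\vert + \vert g_{t_{n-1}}^{-1}g_u\vert$ does not close, and without the explicit block and radius choices the final ``matching against \eqref{e.chunglil}'' cannot be carried out with constants approaching $\kappa_{\pm}$. As it stands, the proposal establishes only the redundant inequality $\kappa_-\le c^2$ and leaves the substance of \eqref{e.smalldeviations} unproved.
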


\begin{remark}
Using scaling properties of $g_{t}$ we can formulate a small deviation principle over an interval $[0,T]$. Let $T>0$ be fixed, then by Theorem \ref{t.smalldeviations} and Remark \ref{r.scaling} we have that
\begin{align*}
& \lim_{\varepsilon \rightarrow 0} -\varepsilon^2 \log \Prob \left( g^{\ast}_T < \varepsilon \right) = c^2 T.
\end{align*}
Indeed,
\begin{align*}
& \lim_{\varepsilon \rightarrow 0} -\frac{\varepsilon^2}{T} \log \Prob \left( g^{\ast}_T < \varepsilon \right)  = \lim_{\varepsilon \rightarrow 0} -\frac{\varepsilon^2}{T} \log \Prob \left( g^{\ast}_1 < \frac{\varepsilon}{\sqrt{T}}\right)=c^2.
\end{align*}

\end{remark}

\begin{remark}
One might expect that the value of the limit, $c$, is the first Dirichlet eigenvalue for the hypoelliptic generator of the Brownian motion $g_{t}$ in the unit ball with respect to the homogeneous norm. Equivalently this constant can be expected to be described by the first exit time from this ball. This is a delicate issue since the infinitesimal generator is a hypoelliptic operator and the ball has a non-smooth boundary. We will address this problem in a forthcoming paper. Note that if the constant $c$ is indeed the first Dirichlet eigenvalue for the hypoelliptic operator in this set, then Theorem~\ref{t.ChungBounds} gives bounds for its value.
\end{remark}

\section{Preliminary estimates}\label{sec.4}

We collect here several preliminary estimates that will be used throughout the paper.
\begin{proposition}\label{p.general}
Let $Y_{t}$ be a positive real-valued process and assume  there exist two finite positive constants $0<a \leqslant b < \infty$ such that
\begin{align}
& \liminf_{t \rightarrow \infty } -  \frac{1}{t} \log \Prob \left( Y_{t} < 1 \right) \geqslant a
\label{e.4.1}
\\
& \limsup_{t \rightarrow \infty } -  \frac{1}{t} \log \Prob \left( Y_{t} < 1 \right) \leqslant b.
\label{e.4.2}
\end{align}
Let  $c$, $x$ and $y$ be real numbers such that $c>1$, and $0< x < a \leqslant b <y$,  then there exists an $n_0\in \mathbb{N}$ such that
\begin{equation}\label{e.generalestimate.below}
\sum_{n=n_0}^\infty \Prob \left( Y_{s_n} <1 \right) < \infty
\end{equation}
where $ s_n := \frac{1}{x} \log \log c^n$, and
\begin{equation}\label{e.generalestimate.above}
\sum_{n=n_0}^\infty \Prob \left( Y_{v_n} <1 \right) = \infty
\end{equation}
where $v_n$ is any positive sequence such that $v_n \rightarrow \infty $ as $n \rightarrow \infty$, and $v_n \leqslant \frac{1}{y} \log \log n^n $ for all $n \geqslant n_0$.
\end{proposition}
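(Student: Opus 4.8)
The plan is to convert the two one-sided asymptotic hypotheses \eqref{e.4.1}--\eqref{e.4.2} into honest exponential bounds on $\Prob(Y_t<1)$ valid for all sufficiently large $t$, and then evaluate the two series by comparing them with $p$-series. Since $x<a$ and $b<y$, I would first fix auxiliary rates $r_1\in(x,a)$ and $r_2\in(b,y)$, for example $r_1=\tfrac{x+a}{2}$ and $r_2=\tfrac{b+y}{2}$. Feeding \eqref{e.4.1} into the definition of $\liminf$ with $\varepsilon=a-r_1>0$ produces a finite $T_1$ with $\Prob(Y_t<1)\leqslant e^{-r_1 t}$ for all $t\geqslant T_1$, and feeding \eqref{e.4.2} into the definition of $\limsup$ with $\varepsilon=r_2-b>0$ produces a finite $T_2$ with $\Prob(Y_t<1)\geqslant e^{-r_2 t}$ for all $t\geqslant T_2$.

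For the convergence statement \eqref{e.generalestimate.below} I would use that $c>1$ forces $s_n=\tfrac1x\log\log c^n\to\infty$, so $s_n\geqslant T_1$ for all $n$ beyond some $n_1$; for such $n$,
\[
\Prob(Y_{s_n}<1)\leqslant e^{-r_1 s_n}=\bigl(\log c^n\bigr)^{-r_1/x}=(n\log c)^{-r_1/x},
\]
and since $r_1/x>1$ the tail $\sum_{n\geqslant n_1}(n\log c)^{-r_1/x}=(\log c)^{-r_1/x}\sum_{n\geqslant n_1}n^{-r_1/x}$ is a convergent $p$-series.

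For the divergence statement \eqref{e.generalestimate.above}, given any admissible $v_n$, the hypothesis $v_n\to\infty$ gives $v_n\geqslant T_2$ for all $n$ beyond some $n_2$, hence $\Prob(Y_{v_n}<1)\geqslant e^{-r_2 v_n}$ there; using that $v\mapsto e^{-r_2 v}$ is decreasing together with $v_n\leqslant\tfrac1y\log\log n^n$ for $n\geqslant n_0$, I obtain, for $n\geqslant\max(n_0,n_2)$,
\[
\Prob(Y_{v_n}<1)\geqslant e^{-(r_2/y)\log\log n^n}=(\log n^n)^{-r_2/y}=(n\log n)^{-r_2/y}.
\]
Since $r_2/y<1$ and $n\log n>1$ for $n\geqslant 2$ we have $(n\log n)^{-r_2/y}\geqslant(n\log n)^{-1}$, and $\sum_{n\geqslant 2}(n\log n)^{-1}=\infty$ by the integral test, so the $v_n$-series diverges. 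I would then set $n_0:=\max(n_1,2)$: enlarging the starting index affects neither the convergence nor the divergence conclusion, and since the constraint on $v_n$ is imposed relative to this same $n_0$ there is no circularity.

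I do not expect a genuine obstacle here: the proposition is essentially a bookkeeping consequence of two-sided exponential control of $\Prob(Y_t<1)$. The two points deserving care are that the \emph{strict} inequalities $x<a$ and $b<y$ are used essentially, since they are precisely what place the comparison exponents $r_1/x$ and $r_2/y$ on opposite sides of $1$; and that the prescribed growth $v_n\leqslant\tfrac1y\log\log n^n$ behaves like $v_n\lesssim\tfrac1y\log n$, which is borderline for $\sum e^{-r_2 v_n}$, the divergence being secured by $r_2<y$.
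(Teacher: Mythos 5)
Your proof is correct and follows essentially the same route as the paper: convert \eqref{e.4.1}--\eqref{e.4.2} into eventual exponential bounds on $\Prob(Y_t<1)$ and compare the two series with $\sum n^{-p}$ and $\sum (n\log n)^{-p}$. Your insertion of the intermediate rates $r_1\in(x,a)$ and $r_2\in(b,y)$ is in fact slightly more careful than the paper, which writes the bounds $e^{-at}$ and $e^{-bt}$ directly (harmless there only because the strict inequalities $x<a$ and $b<y$ leave the needed room, exactly as you observe).
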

\begin{proof}
Let us first show \eqref{e.generalestimate.below}. By \eqref{e.4.1}  we have
\begin{align*}
& \Prob \left( Y_{t} <1 \right)  \leqslant e^{-at}
\end{align*}
for all large enough $t$. Therefore there exists an $n_1\in \mathbb{N}$ such that

\begin{align*}
& \sum_{n=n_1}^\infty \Prob \left( Y_{s_n} <1 \right) \leqslant  \sum_{n=n_1}^\infty e^{-a s_n}
\\
& = \sum_{n=n_1}^\infty e^{-\frac{a}{x} \log \log c^n} =  \sum_{n=n_1}^\infty \left( \frac{1}{n \log c} \right)^{\frac{a}{x}}
\end{align*}
which is a convergent series since $x<a$.

Let us now show \eqref{e.generalestimate.above}. By \eqref{e.4.1}  we have that

\begin{align*}
& \Prob \left( Y_{t} <1 \right)  \geqslant e^{-bt}
\end{align*}
for all large enough $t$, and hence there exists an $n_2 \in \mathbb{N}$ such that

\begin{align*}
&\sum_{n=n_2}^\infty \Prob \left( Y_{v_n} <1 \right)  \geqslant  \sum_{n=n_2}^\infty e^{-b v_n}
\\
& \geqslant \sum_{n=n_2}^\infty e^{-\frac{b}{y}  \log \log n^n} =\sum_{n=n_2}^\infty \left( \frac{1}{n \log n} \right)^{\frac{b}{y}}
\end{align*}
which is divergent since $b<y$. The proof is then completed by taking $n_0:= \max \left( n_1, n_2 \right)$.
\end{proof}

We first prove a weaker version of Theorem \ref{t.smalldeviations}, namely that if the limit in  \eqref{e.smalldeviations} exists, then it is finite and strictly positive. The estimates in Proposition \ref{p.smalldeviations.estimates} will be used in the proof of Chung's law of iterated logarithm. First we introduce the following notation.

\begin{notation}[Dirichlet eigenvalues in $\mathbb{R}^{n}$]\label{n.eigenvalues}
We denote by $\lambda^{(n)}_{1}$ the lowest Dirichlet eigenvalue of $-\frac{1}{2} \Delta_{\R^n}$ on the unit ball in $\mathbb{R}^{n}$, where $0< \lambda_{1}^{\left( n\right)} \leqslant \lambda_{2}^{\left( n\right)} \leqslant ...$  are Dirichlet eigenvalues for the Laplacian $-\frac{1}{2} \Delta_{\R^n}$ in the unit ball $D:= \left\{ x\in \R^n,  \vert x \vert <1 \right\}$.
\end{notation}

Recall that the lowest Dirichlet eigenvalues appear in a small deviation principle for a Brownian motion in $\mathbb{R}^{n}$, see e.g. \cite[Lemma 8.1]{IkedaWatanabe1989}. Namely, suppose $b_{t}$ is a standard Brownian motion in $\mathbb{R}^{n}$, then

\begin{equation}\label{e.s.d.brownian.mot}
\lim_{\varepsilon \rightarrow 0 } - \varepsilon^2 \log \Prob  \left( b^{\ast}_{1} < \varepsilon \right) = \lambda_1^{(n)},
\end{equation}
where $\lambda_1^{(n)}$ is as in Notation \ref{n.eigenvalues}, and
\[
b^{\ast}_1 := \max_{0\leqslant t \leqslant 1} \vert b_{t}\vert_{\R^n}.
\]

\begin{proposition}\label{p.smalldeviations.estimates}
Let $g_{t}$ be the hypoelliptic Brownian motion on the Heisenberg group $\Hei$. Set

\begin{align*}
& c_{-}:= \liminf_{\varepsilon \rightarrow 0 } -  \varepsilon^2 \log \Prob \left( g^{\ast}_{1} < \varepsilon \right),
\\
& c_{+}:= \limsup_{\varepsilon \rightarrow 0 }  - \varepsilon^2 \log \Prob \left( g^{\ast}_{1} < \varepsilon \right).
\end{align*}
Then
\begin{equation}\label{e.smalldeviations.estimates}
\lambda_{1}^{(2)} \leqslant c_{-}\leqslant c_{+} \leqslant c\left( \lambda_{1}^{(1)}, \lambda_{1}^{(2)} \right),
\end{equation}
where
\begin{align*}
& c\left( \lambda_{1}^{(1)}, \lambda_{1}^{(2)} \right):= f(x^{\ast})=\inf_{x \in \left( 0, 1 \right)} f\left( x \right),
\\
& f(x) = \frac{\lambda^{(2)}_1}{\sqrt{1-x}} + \frac{\lambda^{(1)}_1\sqrt{1-x} }{4x},
\\
& x^{\ast}= \frac{\sqrt{(\lambda_1^{(1)})^2 + 32\lambda^{(1)}_1\lambda^{(2)}_1 } -3\lambda^{(1)}_1} {2 \left( 4\lambda^{(2)}_1 - \lambda^{(1)}_1 \right)},
\end{align*}
and $\lambda^{(n)}_1$ are the lowest Dirichlet eigenvalues  on the unit ball as defined in Notation \ref{n.eigenvalues}.
\end{proposition}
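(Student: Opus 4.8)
The plan is to sandwich $g_1^\ast$ between two quantities that are easier to handle and whose small-ball asymptotics are governed by the Euclidean eigenvalues $\lambda_1^{(1)}$ and $\lambda_1^{(2)}$. Recall that $g_1^\ast = \max_{0\le t\le 1}\rho(g_t,e) = \max_{0\le t\le 1}\bigl(\|\bm W_t\|_{\R^2}^4 + A_t^2\bigr)^{1/4}$, so $\{g_1^\ast<\varepsilon\}$ is the event that $\max_t\|\bm W_t\|<\varepsilon$ \emph{and} $\max_t|A_t|<\varepsilon^2$ hold \emph{simultaneously}. The lower bound in \eqref{e.smalldeviations.estimates} is the easy direction: since $\{g_1^\ast<\varepsilon\}\subseteq\{\bm W_1^\ast<\varepsilon\}$, we get $\Prob(g_1^\ast<\varepsilon)\le\Prob(\bm W_1^\ast<\varepsilon)$, hence $-\varepsilon^2\log\Prob(g_1^\ast<\varepsilon)\ge -\varepsilon^2\log\Prob(\bm W_1^\ast<\varepsilon)$, and letting $\varepsilon\to 0$ along the $\liminf$ and invoking \eqref{e.s.d.brownian.mot} with $n=2$ gives $c_-\ge\lambda_1^{(2)}$.

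For the upper bound one needs a lower bound on $\Prob(g_1^\ast<\varepsilon)$, i.e. a way to force both the planar Brownian motion and its stochastic area to be small at the same time. The idea is a time-splitting / conditioning argument controlled by a parameter $x\in(0,1)$. Fix $x$ and split $[0,1] = [0,1-x]\cup[1-x,1]$. On $[0,1-x]$ require $\bm W_t$ to stay in a ball of radius comparable to $\varepsilon/\sqrt{1-x}$ scaled appropriately; by Brownian scaling and \eqref{e.s.d.brownian.mot} with $n=2$ this event has probability $\approx\exp(-\lambda_1^{(2)}(1-x)/\varepsilon^2\cdot(\text{const}))$, more precisely of logarithmic order $-\lambda_1^{(2)}/\bigl(\varepsilon^2(1-x)\bigr)$ after the right normalization. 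On the remaining interval of length $x$, conditionally on the past, one needs $\bm W$ to return near where it must and, crucially, the area increment $A_1-A_{1-x} = \tfrac12\int_{1-x}^1\omega(\bm W_s,d\bm W_s)$ plus the cross term $\tfrac12\omega(\bm W_{1-x}, \bm W_1-\bm W_{1-x})$ to be of size $\varepsilon^2$; here the linear (in the increment) structure lets one reduce the area constraint, on a short time interval where $\bm W$ is already near a fixed point $\bm v$ with $\|\bm v\|\lesssim\varepsilon$, to a \emph{one-dimensional} Brownian small-ball in the direction $\omega(\bm v,\cdot)$, which contributes the factor $\lambda_1^{(1)}$. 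Tracking the scaling exponents (the area scales like $\varepsilon^2$ while $\bm W$ scales like $\varepsilon$, and the direction $\bm v$ has length $\varepsilon$, so the effective one-dimensional small-ball radius on a length-$x$ interval is $\approx\varepsilon^2/\varepsilon = \varepsilon$, but the time is $x$, giving the $\sqrt{1-x}/(4x)$ shape after optimizing the radius split) produces exactly
\[
c_+ \le \frac{\lambda_1^{(2)}}{1-x} + \frac{\lambda_1^{(1)}(1-x)}{4x}\cdot\frac{1}{1-x}\cdot(\text{?}),
\]
and after cleaning up the bookkeeping one lands on $c_+\le f(x)$ for every $x\in(0,1)$; taking the infimum over $x$ gives $c_+\le f(x^\ast)=c(\lambda_1^{(1)},\lambda_1^{(2)})$, and a direct calculus computation (setting $f'(x)=0$, which is a quadratic in $x$) yields the stated $x^\ast$.

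The main obstacle is the second interval: making rigorous the claim that, conditioned on $\bm W_{1-x}=\bm v$ with $\|\bm v\|$ small, the joint event $\{\max_{[1-x,1]}\|\bm W_t-\bm v\| \text{ small}\}\cap\{\max_{[1-x,1]}|\text{area process}| \text{ small}\}$ has probability bounded below by a product of a 2D small-ball factor and a 1D small-ball factor with the correct exponents. One must handle the quadratic term $\int\omega(\bm W_s-\bm v,d\bm W_s)$ (negligible compared to the linear term $\omega(\bm v,\bm W_t-\bm v)$ when the increment is $o(1)$, but this needs an explicit bound), use the Markov property to glue the two pieces, and make sure the $\varepsilon$-dependent radii are chosen so that the intersection is genuinely nonempty with the advertised probability. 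A clean way to organize this is to prove, for each fixed $x$, the inequality
\[
\limsup_{\varepsilon\to 0}-\varepsilon^2\log\Prob(g_1^\ast<\varepsilon) \le f(x),
\]
by exhibiting an explicit subevent built from independent scaled copies of Brownian small-ball events on the two subintervals, applying \eqref{e.s.d.brownian.mot} for $n=1$ and $n=2$ to each, and checking the exponents add up; then optimize. I expect the monotonicity/independence setup and the estimate controlling the quadratic remainder in the area to be where the real work lies; everything after "$c_+\le f(x)$ for all $x$" is elementary calculus.
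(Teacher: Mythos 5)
Your lower bound is exactly the paper's argument and is fine. The upper bound, however, has a genuine gap, and the route you sketch is not the one that works cleanly. First, you misread the role of the parameter $x$: in the correct argument $x$ does not split the \emph{time} interval $[0,1]$; it splits the \emph{radius} constraint between the two components, via the inclusion $\{g_1^\ast<\varepsilon\}\supseteq\{B_1^\ast<(1-x)^{1/4}\varepsilon\}\cap\{A_1^\ast<\sqrt{x}\,\varepsilon^2\}$, which is where the two terms of $f(x)$ come from. Second, and more seriously, your time-splitting scheme never controls the area on the first interval: even if $\Vert \bm W_t\Vert\lesssim\varepsilon$ on $[0,1-x]$, the area $A_t=\tfrac12\int_0^t\omega(\bm W_s,d\bm W_s)$ there has quadratic variation of order $\varepsilon^2$, hence typical fluctuations of order $\varepsilon$, which is far larger than the required threshold $\varepsilon^2$; so the constraint $|A_t|<\varepsilon^2$ on $[0,1-x]$ is a genuine small-ball event that your decomposition leaves unaccounted for. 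The conditioning/linearization step on $[1-x,1]$ is also only asserted (your display even carries a ``?''), and you concede that this is where the real work lies — so the inequality $c_+\le f(x)$ is not actually established.

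The missing idea is the time-change representation of L\'{e}vy's area: $A_t=b_{\tau(t)}$ with $b$ a one-dimensional Brownian motion \emph{independent} of $B$ and $\tau(t)=\tfrac14\int_0^t\vert B_s\vert^2_{\R^2}\,ds$ (Ikeda--Watanabe). On the event $\{B_1^\ast<(1-x)^{1/4}\varepsilon\}$ one has $\tau(1)\le\tfrac14\sqrt{1-x}\,\varepsilon^2$ deterministically, so
\begin{align*}
\Prob\left( g_1^\ast<\varepsilon \right) \geqslant \Prob\left( B_1^\ast<(1-x)^{1/4}\varepsilon \right)\,
\Prob\left( b_1^\ast<\frac{2\sqrt{x}\,\varepsilon}{(1-x)^{1/4}} \right),
\end{align*}
by independence and Brownian scaling of $b$; applying \eqref{e.s.d.brownian.mot} with $n=2$ and $n=1$ then gives $c_+\le\frac{\lambda_1^{(2)}}{\sqrt{1-x}}+\frac{\lambda_1^{(1)}\sqrt{1-x}}{4x}=f(x)$ for every $x\in(0,1)$, and the optimization over $x$ is, as you say, elementary calculus. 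Without this representation (or an equivalent device yielding independence of the area constraint from the $B$-constraint together with the bound on the accumulated clock $\tau(1)$), the joint smallness of $B$ and $A$ over all of $[0,1]$ is exactly the hard point, and your proposal does not supply it.
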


\begin{proof}
The lower bound in \eqref{e.smalldeviations.estimates} follows from  the small deviation principle \ref{e.s.d.brownian.mot} for $\mathbb{R}^{n}$-valued Brownian motion and  the fact that $\Prob  \left(  g^{\ast}_{1} < \varepsilon \right) \leqslant \Prob  \left(  B^{\ast}_{1} < \varepsilon \right)$.

Let us prove now the upper bound. For any $x\in (0,1)$ we have
\begin{align*}
& \Prob  \left( g^{\ast}_{1} < \varepsilon \right) = \Prob  \left( \max_{ 0 \leqslant s \leqslant 1}  \left( \vert B_{s}\vert_{\R^2}^4 + \vert A_{s} \vert_{\R}^2 \right) < \varepsilon^4 \right)
\\
&\geqslant \Prob \left(   B^{\ast}_{1} < \left(1-x\right)^{\frac{1}{4}}\varepsilon , \;  A^{\ast}_{1} < \sqrt{x} \varepsilon^2 \right).
\end{align*}
It is well-known that $A_{t}=b_{\tau (t)}$ where $b_{t}$ is a one-dimensional Brownian motion independent of $B_{t}$, and $\tau(t) =\frac{1}{4} \int_0^t \vert B_{s} \vert^2_{\R^2}ds$, see for example \cite[Chapter 7, Section 6, Example 6.1]{IkedaWatanabe1989}. Therefore we have

\begin{align*}
&\Prob \left(  B^{\ast}_1 < \varepsilon \left(1-x\right)^{\frac{1}{4}}, \;  \sup_{ 0 \leqslant t \leqslant 1} \vert b_{\tau(t)} \vert_{\R} < \varepsilon^2\sqrt{x} \right)
\\
& = \Prob \left(  B^{\ast}_1 < \varepsilon\left(1-x\right)^{\frac{1}{4}}, \;  \sup_{ 0\leqslant t \leqslant \tau(1) } \vert b_{t}\vert_{\R} < \varepsilon^2\sqrt{x} \right)
\\
&\geqslant \Prob \left(    B^{\ast}_1< \varepsilon\left(1-x\right)^{\frac{1}{4}}, \;  \sup_{ 0\leqslant t \leqslant \frac{\varepsilon^2}{4} \left(1-x\right)^{\frac{1}{2}} } \vert b_{t}\vert_{\R} < \varepsilon^2 \sqrt{x}
\right)
\\
& = \Prob \left(     B^{\ast}_1< \varepsilon \left(1-x\right)^{\frac{1}{4}}, \;   b^{\ast}_1< \frac{2\varepsilon\sqrt{x}}{\left(1-x\right)^{\frac{1}{4}}}
\right)
\\
&= \Prob \left(  B^{\ast}_1 < \varepsilon\left(1-x\right)^{\frac{1}{4}} \right) \Prob \left(  b^{\ast}_1 < \frac{2\varepsilon\sqrt{x}}{\left(1-x\right)^{\frac{1}{4}}} \right).
\end{align*}
Thus
\begin{align*}
&\log  \Prob  \left( g^{\ast}_{1} < \varepsilon \right)  \geqslant \log \Prob \left(  B^{\ast}_1 < \varepsilon \left(1-x\right)^{\frac{1}{4}} \right) + \log \Prob \left(  b^{\ast}_1 < \frac{2\varepsilon \sqrt{x}}{\left(1-x\right)^{\frac{1}{4}}} \right),
\end{align*}
and hence
\begin{align*}
& -\varepsilon^2 \log \Prob  \left(  g^{\ast}_1 < \varepsilon \right)  \leqslant  -\varepsilon^2\left(1-x\right)^{\frac{1}{2}} \log \Prob \left(    B^{\ast}_1< \varepsilon \left(1-x\right)^{\frac{1}{4}} \right) \frac{1}{\left(1-x\right)^{\frac{1}{2}} }
\\
& - \varepsilon^ 2 \frac{4x}{\left(1-x\right)^{\frac{1}{2}}}  \log \Prob \left(  b^{\ast}_1 < \frac{2\sqrt{x}}{\left(1-x\right)^{\frac{1}{4}}}
\varepsilon \right)  \frac{ \left(1-x\right)^{\frac{1}{2}}}{ 4x}.
\end{align*}
From the small deviation principle  \eqref{e.s.d.brownian.mot} for a $\mathbb{R}^{n}$-valued  Brownian motion applied to $B_{t}$ and $b_{t}$ it follows that
\begin{align*}
& \limsup_{\varepsilon\rightarrow 0} - \varepsilon^2 \log \Prob  \left(  g^{\ast}_1< \varepsilon \right)  \leqslant  \frac{\lambda^{(2)}_1}{\sqrt{1-x}} + \frac{\lambda^{(1)}_1\sqrt{1-x} }{4x}
\end{align*}
for all $x$ in $(0, 1)$. Note that

\[
f\left( x \right):=\frac{\lambda^{(2)}_{1}}{\sqrt{1-x}} + \frac{\lambda^{(1)}_{1}\sqrt{1-x} }{4x} >0 \text{ for all } x\in \left( 0, 1 \right)
\]
always has a local minimum over $\left( 0, 1 \right)$ even if we do not rely on the known values of the eigenvalues $\lambda^{(2)}_1$ and $\lambda^{(1)}_{1}$. It is easy to see that this minimum is achieved at
\[
x^{\ast}= \frac{\sqrt{(\lambda_1^{(1)})^2 + 32\lambda^{(1)}_1\lambda^{(2)}_1 } -3\lambda^{(1)}_1} {2 \left( 4\lambda^{(2)}_1 - \lambda^{(1)}_1 \right)} \in \left( 0, 1 \right)
\]
which gives  Equation \eqref{e.smalldeviations.estimates}.
\end{proof}

\section{Proof of the main results}\label{sec.5}

\subsection{Chung's law of iterated logarithm for $g_{t}$}
The goal of this section is to prove Theorem \ref{t.chunglil}. Later in Proposition \ref{p.zero-one.law} we prove  that $c:= \liminf_{t\rightarrow \infty} \phi (t) g^{\ast}_{t}$ is constant a.s., where $\phi\left( t \right):= \sqrt{ \frac{\log \log t}{t}}$. For now $c$ is a random variable for which we first show lower and upper bounds in Proposition  \ref{p.chunglil.lowerbound} and Proposition \ref{p.chunglil.upperbound}.

\begin{proposition}[Lower bound] \label{p.chunglil.lowerbound}
For the lowest eigenvalue $\lambda_1^{(2)}$ as introduced in  Notation \ref{n.eigenvalues} we have

\[
c=\liminf_{t \rightarrow \infty} \phi\left( t \right) g_{t}^{\ast} \geqslant \sqrt{\lambda^{(2)}_1} \hskip0.1in \text{a.s.}
\]
\end{proposition}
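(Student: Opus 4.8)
The plan is a first Borel--Cantelli argument along a geometric subsequence $t_n=\theta^n$ with $\theta>1$, followed by a monotonicity interpolation between consecutive subsequence points, and a final passage to the limit $\theta\downarrow 1$ through a countable set of parameters. The engine is Proposition~\ref{p.general}, fed by the one-time small ball estimate of Proposition~\ref{p.smalldeviations.estimates}.

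First I would reduce the subsequence small ball probabilities to the one-time rate of $g_1^\ast$. The scaling relation of Remark~\ref{r.scaling} upgrades (via the Heisenberg dilations combined with Brownian scaling, so that the processes $\{g_{\varepsilon t}\}_t$ and $\{\delta_{\sqrt\varepsilon}g_t\}_t$ agree in law and $\rho$ is homogeneous) to $g_{\theta^n}^\ast\stackrel{(d)}{=}\sqrt{\theta^n}\,g_1^\ast$, hence for any $\delta>0$
\[
\Prob\big(\phi(\theta^n)\,g_{\theta^n}^\ast<\delta\big)=\Prob\big(g_{\theta^n}^\ast<\delta\sqrt{\theta^n/\log\log\theta^n}\big)=\Prob\big(g_1^\ast<\delta/\sqrt{\log\log\theta^n}\big).
\]
Now set $Y_t:=\sqrt{t}\,g_1^\ast$, a positive real-valued process; the substitution $\varepsilon=1/\sqrt t$ identifies $-\tfrac1t\log\Prob(Y_t<1)$ with $-\varepsilon^2\log\Prob(g_1^\ast<\varepsilon)$, so by definition of $c_-,c_+$ and Proposition~\ref{p.smalldeviations.estimates}, $Y_t$ satisfies \eqref{e.4.1}--\eqref{e.4.2} with $a=c_-\geq\lambda_1^{(2)}>0$ and $b=c_+\leq c(\lambda_1^{(1)},\lambda_1^{(2)})<\infty$. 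Moreover $\Prob(\phi(\theta^n)g_{\theta^n}^\ast<\delta)=\Prob(Y_{s_n}<1)$ with $s_n=\tfrac1{\delta^2}\log\log\theta^n$, which is exactly the sequence \eqref{e.generalestimate.below} for $c=\theta$ and $x=\delta^2$. Therefore, whenever $\delta^2<\lambda_1^{(2)}\leq a$, Proposition~\ref{p.general} gives $\sum_n\Prob(\phi(\theta^n)g_{\theta^n}^\ast<\delta)<\infty$.

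By the first Borel--Cantelli lemma, for each fixed $\delta<\sqrt{\lambda_1^{(2)}}$ and $\theta>1$ there is a.s. an $N=N(\omega)$ with $\phi(\theta^n)g_{\theta^n}^\ast\geq\delta$ for all $n\geq N$. Then I would interpolate using that $t\mapsto g_t^\ast$ is non-decreasing while $\phi$ is eventually strictly decreasing (the derivative of $\phi^2=\log\log t/t$ is $(1/\log t-\log\log t)/t^2<0$ for large $t$): for $t\in[\theta^{n-1},\theta^n]$ with $n$ large,
\[
\phi(t)\,g_t^\ast\geq\phi(\theta^n)\,g_{\theta^{n-1}}^\ast=\frac{\phi(\theta^n)}{\phi(\theta^{n-1})}\,\phi(\theta^{n-1})\,g_{\theta^{n-1}}^\ast\geq\frac{\phi(\theta^n)}{\phi(\theta^{n-1})}\,\delta .
\]
Since $\phi(\theta^n)/\phi(\theta^{n-1})=\theta^{-1/2}\sqrt{\log(n\log\theta)/\log((n-1)\log\theta)}\to\theta^{-1/2}$, taking $t\to\infty$ gives $\liminf_{t\to\infty}\phi(t)g_t^\ast\geq\delta/\sqrt\theta$ a.s.

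Finally, intersecting over rational $\delta<\sqrt{\lambda_1^{(2)}}$ and rational $\theta>1$ — countably many full-measure events — yields $\liminf_{t\to\infty}\phi(t)g_t^\ast\geq\sup\{\delta/\sqrt\theta\}=\sqrt{\lambda_1^{(2)}}$ a.s., which is the claim. The substantive input is the lower bound $c_-\geq\lambda_1^{(2)}$ from Proposition~\ref{p.smalldeviations.estimates}; the only delicate points are the upgrade of the scaling relation of Remark~\ref{r.scaling} to the running maximum, and the bookkeeping in the interpolation step — the competing monotonicities of $g_t^\ast$ and $\phi$ and the limit of the ratio $\phi(\theta^n)/\phi(\theta^{n-1})$ — where the factor $\theta^{-1/2}$ appears and is then removed by sending $\theta\downarrow1$.
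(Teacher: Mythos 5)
Your proposal is correct and follows essentially the same route as the paper: the lower bound of Proposition \ref{p.smalldeviations.estimates} (transferred to $g_t^\ast$ by the process-level scaling) feeds Proposition \ref{p.general} and a first Borel--Cantelli argument along a geometric sequence, after which monotonicity of $g_t^\ast$ and of $\phi$ interpolates between subsequence points at the cost of a factor tending to $\theta^{-1/2}$, removed by letting $\theta\downarrow 1$. The paper packages the same idea slightly differently (blocks $[M^n,M^{n+2}]$ and the bound $\inf_{a\leqslant t\leqslant b}\phi(t)g_t^\ast\geqslant\sqrt{a/b}\,\phi(a)g_a^\ast$, showing $\Prob(c<r)=0$ for each $r<\sqrt{\lambda_1^{(2)}}$), but the substance is identical.
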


\begin{proof}
While this proof is motivated by \cite{Remillard1994}, we provide  a detailed argument for completeness. Let $r>0$ be such that $0<r<\sqrt{\lambda_1^{(2)}}$. Then we can find a constant $M>1$ such that $rM<\sqrt{\lambda_1^{(2)}}$. We will show that
\[
\Prob \left( c<r \right) =  0 \text{ for all }   0<r<\sqrt{\lambda_1^{(2)}}.
\]
We have
\begin{align*}
&\Prob \left( c< r \right)= \Prob \left( \liminf_{t\rightarrow \infty} \phi (t) g^{\ast}_{t} < r \right) \leqslant \Prob \left( \bigcap_{k \geqslant 1 } \bigcup_{n \geqslant k}  \left\{ \inf_{M^n \leqslant t \leqslant  M^{n+2} } \phi (t) g^{\ast}_{t} < r \right\} \right)
\\
&\leqslant   \Prob \left( \bigcap_{k \geqslant 1 } \bigcup_{n \geqslant k}   \left\{ \frac{1}{M} \phi \left( M^n \right) g^{\ast}_{ M^n}  < r \right\} \right) = \Prob \left( \bigcap_{k \geqslant 1 } \bigcup_{n \geqslant k}   \left\{ g^{\ast} \left(  \frac{M^{n-2}}{r^2} \phi \left( M^n \right)^2\right)  < 1 \right\} \right),
\end{align*}
where $ g^{\ast} \left(  \frac{M^{n-2}}{r^2} \phi \left( M^n \right)^2\right)  := g^{\ast}_{ \frac{M^{n-2}}{r^2} \phi \left( M^n \right)^2}$. Here we used that
\begin{align*}
\inf_{a\leqslant t \leqslant b} \phi (t) g^{\ast}_{t} \geqslant \frac{\sqrt{a}}{ \sqrt{b}} \phi (a) g^{\ast}_a
\end{align*}
for any $0< a<b< \infty $. It is enough to show that

\begin{align*}
& \sum_{n=1} ^{\infty}\Prob \left( g^{\ast} \left(  \frac{M^{n-2}}{r^2} \phi \left( M^n \right)^2\right)  < 1  \right)  <\infty,
\end{align*}
and then the result follows from the Borel-Cantelli Lemma. By \eqref{e.smalldeviations.estimates} and the scaling property of $g_{t}$, it follows that
\begin{align*}
&  \lambda_1^{(2)} \leqslant  \liminf_{\varepsilon \rightarrow 0} -\varepsilon^2 \Prob \left( g^{\ast}_{1} < \varepsilon \right) =  \liminf_{\varepsilon \rightarrow 0}- \varepsilon^2 \Prob \left( \max_{ 0\leqslant s \leqslant 1} \vert g_{\frac{1}{\varepsilon^2} s} \vert <1 \right)
\\
&  = \liminf_{t \rightarrow \infty} -\frac{1}{t}\Prob \left( \max_{0\leqslant s \leqslant t} \vert g_{s} \vert <1 \right) =  \liminf_{t \rightarrow \infty} -\frac{1}{t}\Prob \left( g^{\ast}_{t} <1 \right).
\end{align*}
Moreover,
\begin{align*}
& \frac{M^{n-2}}{r^2} \phi \left( M^n \right)^2= \frac{1}{M^2 r^2}  \log \log M^n,
\end{align*}
and hence we can apply Proposition \ref{p.general} with $Y_{t}= g^{\ast}_{t}$, $a= \lambda_1^{(2)}$, $s_n= \frac{1}{x} \log \log M^n$,  and $x=M^2 r^2$, since $M^2r^2< \lambda_1^{(2)} $.
\end{proof}

Our next step is to show that $c$ is finite almost surely. To do so, we need the following Lemma.

\begin{lemma}\label{l.techincal}
Set $t_n=n^n$. Then for every $\varepsilon>0$
\begin{equation}\label{e.lemma}
\Prob \left( \bigcap_{k \geqslant 1 } \bigcup_{n \geqslant k}  \left\{ \phi\left( t_n \right) g^{\ast}_{t_{n-1} } > \varepsilon \right\} \right)=0
\end{equation}
\end{lemma}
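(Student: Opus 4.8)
The plan is to recognize that the event in \eqref{e.lemma} is exactly $\limsup_{n\to\infty}\big\{\phi(t_n)\,g^*_{t_{n-1}} > \varepsilon\big\}$ and to apply the first Borel--Cantelli lemma; it therefore suffices to prove
\[
\sum_{n}\Prob\left(\phi(t_n)\,g^*_{t_{n-1}} > \varepsilon\right) < \infty .
\]
(Only the behaviour for large $n$ matters, since the probability of a $\limsup$ is insensitive to finitely many terms.) Using the scaling property $g^*_t \stackrel{(d)}{=}\sqrt{t}\,g^*_1$ recorded in Remark \ref{r.scaling} and already used in the proof of Proposition \ref{p.chunglil.lowerbound}, each summand equals $\Prob\big(g^*_1 > \lambda_n\big)$ with $\lambda_n := \varepsilon/(\sqrt{t_{n-1}}\,\phi(t_n))$, so the lemma reduces to a tail estimate for the single random variable $g^*_1$.

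The next step is to check that $\lambda_n\to\infty$ fast enough. Since $\phi(t_n)^2 = \frac{\log\log n^n}{n^n} = \frac{\log n+\log\log n}{n^n}$, we have
\[
\lambda_n^2 = \frac{\varepsilon^2}{t_{n-1}\,\phi(t_n)^2} = \varepsilon^2\,\frac{n^n}{(n-1)^{n-1}\,(\log n+\log\log n)},
\]
and because $\frac{n^n}{(n-1)^{n-1}} = n\big(1+\tfrac{1}{n-1}\big)^{n-1}$ grows like $e\,n$, there is a constant $c_\varepsilon>0$ with $\lambda_n^2 \geqslant c_\varepsilon\, n/\log n$ for all large $n$. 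This is the one point where the choice $t_n=n^n$ is essential: the ratio $t_n/t_{n-1}$ must outgrow the slowly varying factor $\log\log t_n$ (compare the role of $\log\log n^n$ in Proposition \ref{p.general}).

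It remains to bound $\Prob(g^*_1 > \lambda_n)$. From $|g_s|^4 = |B_s|_{\R^2}^4 + A_s^2$ one gets $(g^*_1)^4 \leqslant (B^*_1)^4 + (A^*_1)^2$, where $B^*_1=\max_{0\le s\le1}|B_s|_{\R^2}$ and $A^*_1=\max_{0\le s\le1}|A_s|$. Both factors have finite moments of all orders: $B^*_1$ by Doob's inequality (or the reflection principle), and $A^*_1$ by the Burkholder--Davis--Gundy inequality applied to the continuous martingale $A_t$, together with $\langle A\rangle_1 = \tfrac14\int_0^1|B_s|_{\R^2}^2\,ds \leqslant \tfrac14(B^*_1)^2$. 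Hence $\E[(g^*_1)^4]<\infty$, and Markov's inequality gives $\Prob(g^*_1>\lambda_n)\leqslant \E[(g^*_1)^4]\,\lambda_n^{-4}$; combined with $\lambda_n^2 \geqslant c_\varepsilon\,n/\log n$ this yields $\Prob\big(\phi(t_n)g^*_{t_{n-1}}>\varepsilon\big)\leqslant C\,(\log n/n)^2$, which is summable. Borel--Cantelli then concludes.

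The argument is essentially computational, and the only genuinely delicate point is the growth estimate for $\lambda_n^2$, which is dictated by the sequence $t_n=n^n$ used throughout this part of the paper. If a sharper estimate were ever needed one could instead invoke the sub-exponential tail of $g^*_1$ coming from the time change $A_t = b_{\tau(t)}$ exploited in the proof of Proposition \ref{p.smalldeviations.estimates}, but for the convergence of the series the crude moment bound is more than enough.
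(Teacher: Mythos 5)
Your proof is correct, and it takes a genuinely different route from the paper. The paper splits the event via $(g^{\ast}_{t_{n-1}})^4 \leqslant (B^{\ast}_{t_{n-1}})^4 + (A^{\ast}_{t_{n-1}})^2$ and then handles the two pieces by citing external almost sure results: Jain--Pruitt's ``other law of the iterated logarithm'' \cite{JainPruitt1975} for $\limsup_n \phi(t_n)B^{\ast}_{t_{n-1}}=0$, and Baldi's functional LIL \cite{Baldi1986a} to get $A^{\ast}_{t_{n-1}}\leqslant d\, t_{n-1}\log\log t_{n-1}$ eventually, which kills the area term since $t_{n-1}\log\log t_{n-1}\,\phi(t_n)^2\to 0$. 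You instead argue directly on $g^{\ast}$: the event is $\limsup_n\{\phi(t_n)g^{\ast}_{t_{n-1}}>\varepsilon\}$, the process-level scaling $g^{\ast}_t\stackrel{(d)}{=}\sqrt{t}\,g^{\ast}_1$ (which the paper itself uses in Propositions \ref{p.chunglil.lowerbound} and \ref{p.chunglil.upperbound}) turns each summand into a tail probability $\Prob(g^{\ast}_1>\lambda_n)$ with $\lambda_n^2\geqslant c_\varepsilon\, n/\log n$, and a fourth-moment bound (Doob for $B^{\ast}_1$, BDG with $\langle A\rangle_1\leqslant\tfrac14 (B^{\ast}_1)^2$ for $A^{\ast}_1$) plus Markov gives summands $O\bigl((\log n/n)^2\bigr)$, so the first Borel--Cantelli lemma finishes. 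Your computation of the critical ratio $t_n/t_{n-1}=n(1+\tfrac1{n-1})^{n-1}\sim en$ versus $\log\log t_n\sim\log n$ is the right place where $t_n=n^n$ enters, exactly as in the paper's use of this sequence. What each approach buys: yours is self-contained and elementary, requiring no LIL-type input beyond crude moments and working for any sequence whose ratio $t_n/t_{n-1}$ outgrows $\log\log t_n$ polynomially; the paper's version leans on known sharp almost sure asymptotics for $B$ and $A$ separately, which is heavier machinery but needs no quantitative tail or summability computation.
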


\begin{proof}
It is enough to show that for any $\varepsilon >0$

\begin{align}
& \Prob \left( \bigcap_{k \geqslant 1 } \bigcup_{n \geqslant k}  \left\{ \phi\left( t_n \right) B^{\ast}_{ t_{n-1} } > \varepsilon \right\} \right)=0 \label{e1} \text{ and }
\\
&\Prob \left( \bigcap_{k \geqslant 1 } \bigcup_{n \geqslant k}  \left\{ \phi\left( t_n \right)^2 A^{\ast}_{ t_{n-1} } > \varepsilon \right\} \right)=0. \label{e2}
\end{align}

Indeed,
\begin{align*}
&  \left\{ \phi\left( t_n \right)g^{\ast}_{ t_{n-1} }> \varepsilon \right\} =  \left\{ \phi\left( t_n \right)^4 \max_{0\leqslant s \leqslant t_{n-1}} \left(  \vert B_{s} \vert_{\R^2}^4 + \vert A_{s} \vert^2  \right) > \varepsilon^4 \right\}
\\
& \subset  \left\{ B^{\ast}_{ t_{n-1} }  >\frac{ \varepsilon } {\sqrt[4]{2}\phi\left( t_n \right)} \right\}  \cup  \left\{  A^{\ast}_{t_{n-1} } >\frac{ \varepsilon^2 } {\sqrt{2}\phi\left( t_n \right)^2} \right\},
\end{align*}
and hence
\begin{align*}
& \Prob \left( \bigcap_{k \geqslant 1 } \bigcup_{n \geqslant k}  \left\{ \phi\left( t_n \right) g^{\ast}_{ t_{n-1} } > \varepsilon \right\} \right)
\\
& \leqslant \Prob \left( \bigcap_{k \geqslant 1 } \bigcup_{n \geqslant k}  \left\{ \phi\left( t_n \right) B^{\ast}_{t_{n-1} } >\frac{ \varepsilon} {\sqrt[4]{2}} \right\} \right)
\\
&  + \Prob \left( \bigcap_{k \geqslant 1 } \bigcup_{n \geqslant k}  \left\{ \phi\left( t_n \right)^2 A^{\ast}_{ t_{n-1} } >\frac{ \varepsilon^2} {\sqrt{2}} \right\} \right).
\end{align*}
Let us first prove \eqref{e1}. For every $\varepsilon$ fixed we have that
\begin{align*}
& \left\{ \omega \; : \;  \limsup_{n\rightarrow \infty } \phi \left(t_n \right)  \max_{0\leqslant s \leqslant t_{n-1}} \vert B_{s}(\omega) \vert_{\R^2} = 0 \right\}
\\
& \subset \bigcup_{k \geqslant 1 } \bigcap_{n \geqslant k} \left\{ \omega \; : \; \phi \left(t_n \right)  \max_{0\leqslant s \leqslant t_{n-1}} \vert B_{s}(\omega) \vert_{\R^2} < \varepsilon \right\} .
\end{align*}
Moreover, by \cite[Lemma 1]{JainPruitt1975} we have that for $t_n=n^n$
\begin{align*}
& \limsup_{n\rightarrow \infty } \phi \left(t_n \right)  \max_{0\leqslant s \leqslant t_{n-1}} \vert B_{s} \vert_{\R^2} = 0 \quad \text{a.s.}.
\end{align*}
Combining everything together we have that for $t_n=n^n$
\begin{align*}
&1= \Prob \left(  \limsup_{n\rightarrow \infty } \phi \left(t_n \right)  \max_{0\leqslant s \leqslant t_{n-1}} \vert B_{s} \vert_{\R^2} = 0 \right)
\\
& \leqslant \Prob \left( \bigcup_{k \geqslant 1 } \bigcap_{n \geqslant k}  \left\{ \phi \left(t_n \right)  \max_{0\leqslant s \leqslant t_{n-1}} \vert B_{s}(\omega) \vert_{\R^2} < \varepsilon \right\}  \right),
\end{align*}
and \eqref{e1} is proven since
\begin{align*}
& \Prob \left( \bigcap_{k \geqslant 1 } \bigcup_{n \geqslant k}  \left\{ \phi\left( t_n \right) B^{\ast}_{ t_{n-1} } > \varepsilon \right\} \right)
\\
& = 1- \Prob \left( \bigcup_{k \geqslant 1 } \bigcap_{n \geqslant k}  \left\{ \phi \left(t_n \right)  \max_{0\leqslant s \leqslant t_{n-1}} \vert B_{s}(\omega) \vert_{\R^2} < \varepsilon \right\}  \right).
\end{align*}
Let us now prove \eqref{e2}. It follows from \cite[Example on pp. 449-451]{Baldi1986a} that there exists a finite constant $d>0$ such that  with probability one we have $ A^{\ast}_{ t_{n-1} }  \leqslant d\, t_{n-1}^2 \phi \left( t_{n-1} \right)^2$ eventually, that is,
\[
\Prob \left( \bigcup_{k \geqslant 1} \bigcap_{n\geqslant k } \left\{ A^{\ast}_{ t_{n-1} }  \leqslant d\, t_{n-1}^2 \phi \left( t_{n-1} \right)^2 \right\} \right)=1.
\]
For any $\varepsilon>0$ there exists an $N_\varepsilon$ such that for any $n \geqslant N_\varepsilon$ we have that $ d\, t_{n-1}^2 \phi \left( t_{n-1} \right)^2 \phi \left( t_{n} \right)^2 \leqslant \varepsilon $. Set
\[
E_k :=\bigcap_{n\geqslant k }  \left\{ \phi \left( t_{n} \right)^2 A^{\ast}_{ t_{n-1} }  \leqslant d\, t_{n-1}^2 \phi \left( t_{n-1} \right)^2 \phi \left( t_{n} \right)^2 \right\}.
\]
Then the family $E_k$ is an  increasing sequence of sets, and it the follows that
\begin{align*}
& \bigcup_{k \geqslant 1} \bigcap_{n\geqslant k } \left\{ A^{\ast}_{ t_{n-1} }  \leqslant d\, t_{n-1}^2 \phi \left( t_{n-1} \right)^2 \right\} =  \bigcup_{k \geqslant 1}  E_k \subset \bigcup_{k \geqslant N_\varepsilon}  E_k
\\
& =  \bigcup_{k \geqslant N_\varepsilon}  \bigcap_{n\geqslant k }  \left\{ \phi \left( t_{n} \right)^2 A^{\ast}_{ t_{n-1} }  \leqslant d\, t_{n-1}^2 \phi \left( t_{n-1} \right)^2 \phi \left( t_{n} \right)^2 \right\}
\\
& \subset  \bigcup_{k \geqslant N_\varepsilon}  \bigcap_{n\geqslant k }  \left\{ \phi \left( t_{n} \right)^2 A^{\ast}_{ t_{n-1} }  \leqslant \varepsilon \right\} \subset  \bigcup_{k \geqslant 1}  \bigcap_{n\geqslant k }  \left\{ \phi \left( t_{n} \right)^2 A^{\ast}_{ t_{n-1} }  \leqslant \varepsilon \right\} .
\end{align*}
Therefore for any $ \varepsilon >0$  we have that
\[
\Prob \left(   \bigcup_{k \geqslant 1}  \bigcap_{n\geqslant k }  \left\{ \phi \left( t_{n} \right)^2 A^{\ast}_{ t_{n-1} }  \leqslant \varepsilon \right\} \right) =1,
\]
and so \eqref{e2} is proven.
\end{proof}

\begin{proposition}[Upper bound]\label{p.chunglil.upperbound}
Let $c\left( \lambda_{1}^{(1)}, \lambda_{1}^{(2)} \right)$ be as in Proposition \ref{p.smalldeviations.estimates}, then

\[
c=\liminf_{t \rightarrow \infty} \phi\left( t \right) g_{t}^{\ast}\leqslant \sqrt{c\left( \lambda_{1}^{(1)}, \lambda_{1}^{(2)} \right)} \hskip0.1in \text{ a.s. }
\]
\end{proposition}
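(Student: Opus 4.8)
The plan is to exhibit, for each prescribed $r>\sqrt{c(\lambda_{1}^{(1)},\lambda_{1}^{(2)})}$, a deterministic sequence $t_n\uparrow\infty$ along which the event $\{\phi(t_n)g^{\ast}_{t_n}<r\}$ occurs infinitely often almost surely; letting $r$ decrease to $\sqrt{c(\lambda_{1}^{(1)},\lambda_{1}^{(2)})}$ along a countable sequence then yields the claim. Throughout I take $t_n=n^n$ as in Lemma~\ref{l.techincal}, and I fix auxiliary numbers $r'$ and $\delta>0$ with $\sqrt{c(\lambda_{1}^{(1)},\lambda_{1}^{(2)})}<r'<r$ and $(c_{+}+\delta)/r'^{2}<1$; such a choice is possible because $c_{+}\leqslant c(\lambda_{1}^{(1)},\lambda_{1}^{(2)})$ by Proposition~\ref{p.smalldeviations.estimates}.

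The key step is a decoupling decomposition. For $s\in[t_{n-1},t_n]$ the triangle inequality and left-invariance of $\rho$ give $|g_s|\leqslant\rho(g_s,g_{t_{n-1}})+\rho(g_{t_{n-1}},e)=|g_{t_{n-1}}^{-1}g_s|+|g_{t_{n-1}}|$, so that, writing $\widetilde g^{(n)}_u:=g_{t_{n-1}}^{-1}g_{t_{n-1}+u}$ (which starts at $e$),
\[
g^{\ast}_{t_n}\ \leqslant\ g^{\ast}_{t_{n-1}}\ +\ \widetilde g^{(n),\ast}_{\,t_n-t_{n-1}}.
\]
The decisive feature is that $\widetilde g^{(1)},\widetilde g^{(2)},\dots$ are \emph{independent} and each is itself a hypoelliptic Brownian motion on $\Hei$: since $g_t$ solves the left-invariant equation~\eqref{e.SDE}, or equivalently reading off~\eqref{e.HypoBM}, $\widetilde g^{(n)}_u$ is a measurable function of the increments $\bm{W}_{t_{n-1}+u}-\bm{W}_{t_{n-1}}$, $0\leqslant u\leqslant t_n-t_{n-1}$, and these live on the pairwise disjoint blocks $[t_{n-1},t_n]$.

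By Lemma~\ref{l.techincal} (applied with $\varepsilon=1/k$ and intersected over $k\in\mathbb{N}$), $\phi(t_n)g^{\ast}_{t_{n-1}}\to0$ almost surely, so on the displayed inequality it remains only to control the fresh term: it suffices to show that the independent events $E_n:=\{\phi(t_n)\,\widetilde g^{(n),\ast}_{\,t_n-t_{n-1}}<r'\}$ occur infinitely often, since then on a set of full measure one gets $\phi(t_n)g^{\ast}_{t_n}<r'+\phi(t_n)g^{\ast}_{t_{n-1}}<r$ for infinitely many $n$, and hence $c=\liminf_{t\to\infty}\phi(t)g^{\ast}_t\leqslant\liminf_n\phi(t_n)g^{\ast}_{t_n}\leqslant r$. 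By the scaling property (cf. Remark~\ref{r.scaling}), $\widetilde g^{(n),\ast}_{\,t_n-t_{n-1}}\stackrel{(d)}{=}\sqrt{t_n-t_{n-1}}\,g^{\ast}_1$, so $\Prob(E_n)=\Prob(g^{\ast}_1<\varepsilon_n)$ with $\varepsilon_n^{2}=r'^{2}t_n/\bigl((\log\log t_n)(t_n-t_{n-1})\bigr)$; since $t_{n-1}/t_n\to0$ we have $\varepsilon_n\to0$. By the definition of $c_{+}$, $\Prob(g^{\ast}_1<\varepsilon_n)\geqslant e^{-(c_{+}+\delta)\varepsilon_n^{-2}}$ for all large $n$, and because $\varepsilon_n^{-2}\leqslant(\log\log t_n)/r'^{2}$ (using $t_n-t_{n-1}\leqslant t_n$) this is at least $(\log t_n)^{-(c_{+}+\delta)/r'^{2}}=(n\log n)^{-(c_{+}+\delta)/r'^{2}}$. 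Since $(c_{+}+\delta)/r'^{2}<1$, the series $\sum_n(n\log n)^{-(c_{+}+\delta)/r'^{2}}$ diverges, hence $\sum_n\Prob(E_n)=\infty$, and the second Borel--Cantelli lemma gives that $E_n$ occurs infinitely often with probability one, which would complete the proof.

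The main obstacle is precisely the decoupling: unlike for Euclidean Brownian motion, one cannot restart $g_t$ by subtracting its value, since $g_t$ has no additive independent increments; the remedy is to pass to the left-multiplicative increments $g_{t_{n-1}}^{-1}g_{t_{n-1}+\cdot}$, which are genuinely independent hypoelliptic Brownian motions over disjoint time blocks, and to use Lemma~\ref{l.techincal} to absorb the residual anchor term $\phi(t_n)g^{\ast}_{t_{n-1}}$. All the remaining ingredients — the scaling identity, the small-deviation $\limsup$ bound encoded in $c_{+}$, and the $\log\log t_n$ bookkeeping — are routine.
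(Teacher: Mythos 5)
Your proposal is correct and follows essentially the same route as the paper: blocks $t_n=n^n$, decoupling via the left increments $g_{t_{n-1}}^{-1}g_{t_{n-1}+\cdot}$ (independent copies of $g$ over disjoint blocks), Lemma \ref{l.techincal} to absorb the anchor term $\phi(t_n)g^{\ast}_{t_{n-1}}$, and the second Borel--Cantelli lemma fed by the scaling property together with the small-deviation upper bound $c_{+}\leqslant c\left(\lambda_{1}^{(1)},\lambda_{1}^{(2)}\right)$ from Proposition \ref{p.smalldeviations.estimates}. The only cosmetic differences are that you estimate the series directly with $c_{+}+\delta$ rather than invoking Proposition \ref{p.general}, and you use a single triangle-inequality bound in place of the paper's events $A_n$, $B_n$; both are equivalent.
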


\begin{proof}
Set $t_n=n^n$. We will show that $\Prob \left( c >r \right)=0$ for any $r> \sqrt{c\left( \lambda_{1}^{(1)}, \lambda_{1}^{(2)} \right)}$. Since
\begin{align*}
&   \Prob \left( \liminf_{t} \phi (t) g^{\ast}_{t} >r \right) \leqslant \Prob \left(   \bigcup_{k \geqslant 1 } \bigcap_{n \geqslant k}  \left\{  \phi (t_n) g^{\ast}_{t_n} >r   \right\} \right)
\\
& =  1- \Prob \left(  \bigcap_{k \geqslant 1 } \bigcup_{n \geqslant k}  \left\{  \phi (t_n) g^{\ast}_{t_n} \leqslant r   \right\} \right),
\end{align*}
 it is sufficient to show that for any $r> \sqrt{c\left( \lambda_{1}^{(1)}, \lambda_{1}^{(2)} \right)}$
\begin{equation}\label{e.idk.p}
\Prob \left( \bigcap_{k \geqslant 1 } \bigcup_{n \geqslant k}  \left\{ \phi\left( t_n \right) g^{\ast}_{ t_n } \leqslant r \right\} \right)=1.
\end{equation}

Fix $r > \sqrt{c\left( \lambda_{1}^{(1)}, \lambda_{1}^{(2)} \right)}$ and choose $r_1$ such that $ \sqrt{c\left( \lambda_{1}^{(1)}, \lambda_{1}^{(2)} \right)} <r_1<r$.  Let us define the events
\begin{align*}
& A_n :=  \left\{  \phi \left( t_n \right)\max_{t_{n-1} \leqslant s \leqslant t_n} \vert g_{t_{n-1}}^{-1} g_{s} \vert < r_1  \right\},
\\
& B_n := \left\{ \phi \left( t_n \right)  g^{\ast}_{ t_{n-1} }< \frac{r-r_1}{2} \right\}.
\end{align*}
Then by \eqref{e.triangular.ineq} on the event $A_n \cap B_n$ we have
\begin{align*}
& \phi \left( t_n \right) g^{\ast}_{ t_{n} }  \leqslant \phi \left( t_n \right) g^{\ast}_{ t_{n-1} }+ \phi \left( t_n \right) \max_{ t_{n-1} \leqslant s \leqslant t_n} \vert  g_{t_{n-1}}g_{t_{n-1}}^{-1}g_{s}  \vert
\\
& \leqslant  \phi \left( t_n \right) g^{\ast}_{ t_{n-1} }   +  \phi \left( t_n \right) \vert g_{t_{n-1}} \vert +  \phi \left( t_n \right) \max_{ t_{n-1} \leqslant s \leqslant t_n} \vert g_{t_{n-1}}^{-1}  g_{s}  \vert
\\
& \leqslant   \frac{r-r_1}{2}  +  \frac{r-r_1}{2}  + r_1 = r,
\end{align*}
and hence
\begin{align*}
&\Prob \left(  A_n \cap B_n  \right) \leqslant \Prob \left(  \phi \left( t_n \right) g^{\ast}_{ t_{n} }  \leqslant r \right).
\end{align*}
By Lemma \ref{l.techincal} we have $\Prob \left( \bigcap_{k \geqslant 1 } \bigcup_{n \geqslant k}  B_n^c \right) =0$, and therefore

\begin{align*}
&  \Prob \left( \bigcap_{k \geqslant 1 } \bigcup_{n \geqslant k}  \left\{ \phi \left( t_n \right)\max_{0 \leqslant s \leqslant t_n - t_{n-1}} \vert g_{t_{n-1}}^{-1} g_{s+ t_{n-1}} \vert < r_1  \right\} \right)
\\
& =  \Prob \left( \bigcap_{k \geqslant 1 } \bigcup_{n \geqslant k}  A_n \right) = \Prob \left( \bigcap_{k \geqslant 1 } \bigcup_{n \geqslant k}  \left(  A_n  \cap B_n \right) \right)
\\
&  \leqslant \Prob \left( \bigcap_{k \geqslant 1 } \bigcup_{n \geqslant k}  \left\{ \phi\left( t_n \right) g^{\ast} \left( t_n \right) \leqslant r \right\} \right).
\end{align*}
Equation \eqref{e.idk.p} holds if  we can show that

\begin{align*}
\Prob \left( \bigcap_{k \geqslant 1 } \bigcup_{n \geqslant k}  \left\{ \phi \left( t_n \right)\max_{0 \leqslant s \leqslant t_n - t_{n-1}} \vert g_{t_{n-1}}^{-1} g_{s+ t_{n-1}} \vert < r_1  \right\} \right)  =1.
\end{align*}
Note that  $ g_{t_{n-1}}^{-1}  g_{s+t_{n-1}} \stackrel{(d)}{=}  g_{s}$ and it is independent of $\mathcal{F}_{t_{n-1}} $. Indeed,
\begin{align}
 & g_{t_{n-1}}^{-1}  g_{s+t_{n-1}}\label{e.LeftIncrement}
 \\
 &   = \left( B_{s+t_{n-1}} - B_{t_{n-1}},  \frac{1}{2} \int_{t_{n-1}} ^{s+ t_{n-1}} \omega \left( B_u, dB_u \right)   + \frac{1}{2} \omega \left(  B_{s+t_{n-1}} , B_{t_{n-1}}  \right) \right)
 \notag
 \\
 & \stackrel{(d)}{=} \left( B_{s}, \frac{1}{2} \int_0^s \omega\left( B_u, dB_u \right) \right) = g_{s}.\notag
\end{align}
The assumptions of the Borel-Cantelli Lemma are satisfied, therefore we only need to show that the series with the term
\begin{align*}
&\Prob \left( \phi \left( t_n \right)\max_{0 \leqslant s \leqslant t_n - t_{n-1}} \vert g_{t_{n-1}}^{-1}  g_{s+ t_{n-1}} \vert < r_1  \right)
\end{align*}
diverges. We have
\begin{align*}
&\Prob \left( \phi \left( t_n \right)\max_{0 \leqslant s \leqslant t_n - t_{n-1}} \vert g_{t_{n-1}}^{-1} g_{s+ t_{n-1}} \vert < r_1  \right)
\\
&  =  \Prob \left( \max_{0 \leqslant s \leqslant t_n - t_{n-1}} \frac{\phi \left( t_n \right) }{r_1} \vert g_{s}\vert < 1  \right)
\\
&  = \Prob \left( g^{\ast} \left( \frac{t_n - t_{n-1}}{r_1^2} \phi\left( t_n \right)^2 \right) <1 \right),
\end{align*}
where $g^{\ast} \left( \frac{t_n - t_{n-1}}{r_1^2} \phi\left( t_n \right)^2 \right): = g^{\ast}_{ \frac{t_n - t_{n-1}}{r_1^2} \phi\left( t_n \right)^2 } $.

By \eqref{e.smalldeviations.estimates} and the scaling property of $g_{t}$ it follows that
\begin{align*}
&  c\left( \lambda_{1}^{(1)}, \lambda_{1}^{(2)} \right) \geqslant  \limsup_{\varepsilon \rightarrow 0} -\varepsilon^2 \Prob \left( g^{\ast}_{1} < \varepsilon \right) =  \limsup_{\varepsilon \rightarrow 0}- \varepsilon^2 \Prob \left( \max_{ 0\leqslant s \leqslant 1} \vert g_{\frac{1}{\varepsilon^2} s} \vert <1 \right)
\\
& = \limsup_{t \rightarrow \infty} -\frac{1}{t}\Prob \left( \max_{0\leqslant s \leqslant t} \vert g_{s} \vert <1 \right) =  \limsup_{t \rightarrow \infty} -\frac{1}{t}\Prob \left( g^{\ast}_{t} <1 \right).
\end{align*}
Moreover,
\begin{align*}
& \frac{t_n - t_{n-1}}{r_1^2} \phi\left( t_n \right)^2 = \frac{1}{r_1^2} \frac{t_n-t_{n-1}}{t_n} \log \log t_n
\\
& \leqslant \frac{1}{r_1^2} \log \log t_n = \frac{1}{r_1^2} \log \log n^n
\end{align*}
and hence we can apply Proposition \ref{p.general} with $Y_{t}= g^{\ast}_{t}$, $b= c\left( \lambda_{1}^{(1)}, \lambda_{1}^{(2)} \right)$, $v_n= \frac{1}{y} \log \log n^n$,  and $y=r_1^2$, since $r_1^2> c\left( \lambda_{1}^{(1)}, \lambda_{1}^{(2)} \right)$.
\end{proof}

\begin{remark}\label{r.not.right.multiplication}
In the proof of Proposition \ref{p.chunglil.upperbound} we used left increments $g_{t_{n-1}}^{-1} g_{s+t_{n-1}}$. It is easy to check that the argument does not work if one considers the right increments $g_{s+t_{n-1}}  g_{t_{n-1}}^{-1}$ instead. Indeed,
\begin{align*}
 & g_{s+t_{n-1}}  g_{t_{n-1}}^{-1}
 \\
 &   = \left( B_{s+t_{n-1}} - B_{t_{n-1}},  \frac{1}{2} \int_{t_{n-1}} ^{s+ t_{n-1}} \omega \left( B_u, dB_u \right)   -  \frac{1}{2} \omega \left(  B_{s+t_{n-1}} , B_{t_{n-1}}  \right) \right)
 \\
 & = \left( B_{s+t_{n-1}} - B_{t_{n-1}},  \frac{1}{2} \int_{0 } ^{s} \omega \left( B_{u+ t_{n-1}} - B_{t_{n-1}}, dB_{u + t_{n-1}} \right)  \right.
 \\
 & \left. +\frac{1}{2} \int_{0 } ^{s} \omega \left( B_{t_{n-1}} , dB_{u + t_{n-1}} \right)   -  \frac{1}{2} \omega \left(  B_{s+t_{n-1}} , B_{t_{n-1}}  \right) \right)
 \\
 &= \left( B_{s+t_{n-1}} - B_{t_{n-1}},  \frac{1}{2} \int_{0 } ^{s} \omega \left( B_{u+ t_{n-1}} - B_{t_{n-1}}, dB_{u + t_{n-1}} \right)  \right.
 \\
 &\left. +\frac{1}{2}  \omega \left( B_{t_{n-1}} , B_{s+ t_{n-1}} - B_{t_{n-1}} \right)   -  \frac{1}{2} \omega \left(  B_{s+t_{n-1}} , B_{t_{n-1}}  \right) \right)
 \\
  &= \left( B_{s+t_{n-1}} - B_{t_{n-1}},  \frac{1}{2} \int_{0 } ^{s} \omega \left( B_{u+ t_{n-1}} - B_{t_{n-1}}, dB_{u + t_{n-1}} \right)
  \right.
  \\
  &
  \left.+  \omega \left( B_{t_{n-1}} , B_{s+ t_{n-1}} - B_{t_{n-1}} \right)   \right)
 \\
 & \stackrel{(d)}{=} \left( B_{s}, \frac{1}{2} \int_0^s \omega\left( B_u, dB_u \right) + \omega \left( B_{t_{n-1}} , B_{s} \right) \right)
 \\
 & \not=  \left( B_{s}, \frac{1}{2} \int_0^s \omega\left( B_u, dB_u \right) \right)  = g_{s}.
\end{align*}
This is a consequence of our choice of the (left) Brownian motion $g_{t}$ defined by the left translation in \eqref{e.SDE}.
\end{remark}
The next statement completes the proof of Theorem \ref{t.chunglil}.

\begin{proposition}\label{p.zero-one.law}
Let $c$ be the random variable defined by \eqref{e.chunglil}, then $c$ is constant a.s.
\end{proposition}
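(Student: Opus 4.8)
The plan is to establish a zero--one law: I will show that the random variable $c$ from \eqref{e.chunglil} is measurable with respect to a $\Prob$-trivial $\sigma$-algebra, namely the germ field at infinity of the increments of the driving Brownian motion $\bm{W}$. The substance of the argument is that neither the behaviour of $g$ on an initial interval $[0,T]$ nor the base point $g_T$ influences the value of $c$, because both enter $\phi(t)g^\ast_t$ only through fixed random variables which the prefactor $\phi(t)\to 0$ annihilates.

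\emph{Step 1: discarding $[0,T]$.} Fix $T>0$. Writing $g_t^\ast=\max\bigl(g_T^\ast,\ \max_{T\leqslant s\leqslant t}|g_s|\bigr)$ and using $b\leqslant\max(a,b)\leqslant a+b$ for nonnegative $a,b$, one gets
\[
\Bigl|\phi(t)g_t^\ast-\phi(t)\max_{T\leqslant s\leqslant t}|g_s|\Bigr|\leqslant \phi(t)\,g_T^\ast\longrightarrow 0,\qquad t\to\infty,
\]
since $g_T^\ast$ is a fixed finite random variable. Hence $c=\liminf_{t\to\infty}\phi(t)\max_{T\leqslant s\leqslant t}|g_s|$. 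Next, by the left-invariance of $\rho$ together with the triangle inequality for the homogeneous norm used in \eqref{e.triangular.ineq}, for $T\leqslant s$ we have $\bigl||g_s|-|g_T^{-1}g_s|\bigr|\leqslant|g_T|$; therefore $\bigl|\max_{T\leqslant s\leqslant t}|g_s|-\max_{T\leqslant s\leqslant t}|g_T^{-1}g_s|\bigr|\leqslant|g_T|$, and multiplying by $\phi(t)$ and letting $t\to\infty$ gives
\[
c=\liminf_{t\to\infty}\phi(t)\max_{T\leqslant s\leqslant t}\bigl|g_T^{-1}g_s\bigr|.
\]

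\emph{Step 2: measurability and triviality.} The left increment is a function of the increment path $\{\bm{W}_u-\bm{W}_T:u\geqslant T\}$ alone: substituting $\bm{W}_u=\bm{W}_T+(\bm{W}_u-\bm{W}_T)$ in the area integral in \eqref{e.LeftIncrement} and using skew-symmetry of $\omega$, one finds $g_T^{-1}g_s=\bigl(\bm{W}_s-\bm{W}_T,\ \tfrac12\int_T^s\omega(\bm{W}_u-\bm{W}_T,\,d(\bm{W}_u-\bm{W}_T))\bigr)$ for $s\geqslant T$. Thus $\max_{T\leqslant s\leqslant t}|g_T^{-1}g_s|$ is $\sigma(\bm{W}_u-\bm{W}_T:T\leqslant u\leqslant t)$-measurable, and since $t\mapsto\phi(t)\max_{T\leqslant s\leqslant t}|g_T^{-1}g_s|$ is continuous for large $t$, the $\liminf$ can be taken along rational $t\to\infty$, so $c$ is measurable with respect to $\sigma(\bm{W}_u-\bm{W}_T:u\geqslant T)$ for every $T>0$. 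Consequently $c$ is measurable with respect to $\mathcal{G}:=\bigcap_{T>0}\sigma(\bm{W}_u-\bm{W}_T:u\geqslant T)$. By the independence of Brownian increments from the past (Notation \ref{ProbabilisticSetting}), each $\sigma(\bm{W}_u-\bm{W}_T:u\geqslant T)$ is independent of $\mathcal{F}_T$, so every event in $\mathcal{G}$ is independent of $\mathcal{F}_T$ for all $T$, hence of $\mathcal{F}_\infty=\sigma\bigl(\bigcup_T\mathcal{F}_T\bigr)\supseteq\mathcal{G}$. Therefore $\mathcal{G}$ is $\Prob$-trivial and $c$ equals a constant a.s.; combined with Proposition \ref{p.chunglil.lowerbound} and Proposition \ref{p.chunglil.upperbound} this constant lies in $[\sqrt{\lambda_1^{(2)}},\ \sqrt{c(\lambda_1^{(1)},\lambda_1^{(2)})}]\subset(0,\infty)$, which also completes the proof of Theorem \ref{t.chunglil}.

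\textbf{Main obstacle.} The only delicate point is Step 1: one must simultaneously strip off the contribution of $[0,T]$ to the running maximum and replace the absolute position $g_s$ by the left increment $g_T^{-1}g_s$, each time bounding the error by a fixed random variable ($g_T^\ast$, resp.\ $|g_T|$) that is killed by $\phi(t)\to0$; the left-invariant triangle inequality \eqref{e.triangular.ineq} and the explicit form of the increment in \eqref{e.LeftIncrement} are exactly what make this work, and it is here that the choice of left (rather than right) increments, highlighted in Remark \ref{r.not.right.multiplication}, is used. The remaining tail-triviality argument is entirely standard, and finiteness of the resulting constant is already guaranteed by Proposition \ref{p.chunglil.upperbound}.
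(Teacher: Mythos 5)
Your proof is correct, but it takes a genuinely different route from the paper's. The paper also argues via tail-measurability plus a 0--1 law, but it works in coordinates: it shows $c^4=\liminf_{t\to\infty}\phi(t)^4\max_{u\leqslant s\leqslant t}\bigl[\vert B_s\vert^4+\bigl(\tfrac12\int_u^s\omega(B_v,dB_v)\bigr)^2\bigr]$, i.e.\ it recenters only the area $A_s\mapsto A_s-A_u$ while keeping the absolute position $B_s$; the resulting cross term $\vert A_u\vert\max_{0\leqslant s\leqslant t}\vert A_s\vert$ grows with $t$, and the paper controls it by invoking R\'emillard's Chung-type law $\liminf_{t\to\infty}\phi(t)^2\max_{0\leqslant s\leqslant t}\vert A_s\vert=\tfrac{\pi}{4}$ from \cite{Remillard1994}, finally using triviality of the tail field $\bigcap_{u>0}\sigma\{B_r:r\geqslant u\}$. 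You instead recenter at the group level, replacing $\vert g_s\vert$ by the left increment $\vert g_T^{-1}g_s\vert$: the left-invariant triangle inequality \eqref{e.triangular.ineq} together with $\vert g^{-1}\vert=\vert g\vert$ bounds the error by the single fixed random variable $\vert g_T\vert$, which the prefactor $\phi(t)\to0$ kills, and the increment identity (the same computation as \eqref{e.LeftIncrement}, consistent with Remark \ref{r.not.right.multiplication}) exhibits $g_T^{-1}g_s$ as a functional of the post-$T$ increments of $\bm{W}$, whose tail field $\mathcal{G}$ you prove trivial directly from independence of increments. What your approach buys: no appeal to R\'emillard's theorem, an error term that is a fixed random variable rather than one growing in $t$ (so the passage to equal liminfs is immediate), and a self-contained triviality argument instead of citing the Brownian tail field; what the paper's version buys is the explicit coordinate expression \eqref{e.01law}. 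The only points worth stating carefully in your write-up are routine and no worse than the analogous steps in the paper: the stochastic integral $\int_T^s\omega(\bm{W}_u-\bm{W}_T,d(\bm{W}_u-\bm{W}_T))$ is measurable with respect to the increment $\sigma$-field only up to null sets (choose such a version, valid simultaneously for all $s$ by continuity), and the identification of the liminf over real $t$ with the liminf over rational $t$ uses a.s.\ continuity of $t\mapsto\phi(t)\max_{T\leqslant s\leqslant t}\vert g_T^{-1}g_s\vert$.
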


\begin{proof}
Let $\mathcal{T}_u:= \sigma \left\{ B_r, \, r\geqslant u \right\}$, and $\mathcal{T}:= \cap_{u>0} \mathcal{T}_u$ be the tail $\sigma$-algebra generated by the Brownian motion, which is trivial by Kolmogorov's 0-1 law. We will show that
\begin{equation}\label{e.01law}
c^4= \liminf_{t\rightarrow \infty} \phi (t)^4 \max_{u \leqslant s \leqslant t }  \left[ \vert B_{s} \vert^4  + \left( \frac{1}{2} \int_u^s \omega \left(B_v, dB_v \right) \right)^2 \right].
\end{equation}
This means that the random variable $c$ is $\mathcal{T}_u$-measurable for every $u$ and hence $\mathcal{T}$-measurable. Since  $\mathcal{T}$ is trivial, then $c$ is constant a.s. because $\mathcal{T}$ is trivial. Let us now prove  \eqref{e.01law}. Suppose $u$ is fixed, and note that

\begin{equation}\label{e.idk}
c= \liminf_{t\rightarrow \infty} \phi (t) \max_{u\leqslant s \leqslant t} \vert g_{s} \vert.
\end{equation}
Indeed,
\begin{align*}
& \max_{u\leqslant s \leqslant t} \vert g_{s} \vert \leqslant  \max_{0\leqslant s \leqslant t} \vert g_{s} \vert \leqslant  \max_{u\leqslant s \leqslant t} \vert g_{s} \vert + \max_{0\leqslant s \leqslant u} \vert g_{s} \vert,
\end{align*}
and \eqref{e.idk} follows from the fact that $\lim_{t\rightarrow \infty }\phi(t)=0$. Using the triangular inequality one can show that
\begin{align}\label{e.idk2}
 &\max_{u \leqslant s \leqslant t }  \left[ \vert B_{s} \vert^4  + \left( \frac{1}{2} \int_u^s \omega \left(B_v, dB_v \right) \right)^2 \right] - A_u^2 -2\vert A_u \vert  \max_{0\leqslant s \leqslant t} \vert A_{s} \vert  \notag
\\
& \leqslant \max_{u\leqslant s \leqslant t} \vert g_{s} \vert^4 \\
& \leqslant  \max_{u \leqslant s \leqslant t }  \left[ \vert B_{s} \vert^4  + \left( \frac{1}{2} \int_u^s \omega \left(B_v, dB_v \right) \right)^2 \right]  +2A_u^2 + \vert  A_u \vert  \max_{0 \leqslant s \leqslant t }  \vert  A_{s} \vert \notag.
\end{align}
Indeed,
\begin{align*}
& \max_{u\leqslant s \leqslant t} \vert g_{s} \vert^4 = \max_{u \leqslant s \leqslant t }  \left[ \vert B_{s} \vert^4  +A_{s}^2 \right]
\\
& = \max_{u \leqslant s \leqslant t }  \left[ \vert B_{s} \vert^4   + \left( \frac{1}{2} \int_u^s \omega \left(B_v, dB_v \right)+A_u \right)^2\right]
\\
& = \max_{u \leqslant s \leqslant t }  \left[ \vert B_{s} \vert^4   + \left( \frac{1}{2} \int_u^s \omega \left(B_v, dB_v \right) \right)^2 + A_u^2 + A_u    \int_u^s \omega \left(B_v, dB_v \right) \right]  \\
& \leqslant \max_{u \leqslant s \leqslant t }  \left[ \vert B_{s} \vert^4   + \left( \frac{1}{2} \int_u^s \omega \left(B_v, dB_v \right) \right)^2 \right] + A_u^2 +\vert A_u \vert \max_{u \leqslant s \leqslant t }  \vert   \int_u^s \omega \left(B_v, dB_v \right) \vert
\\
&= \max_{u \leqslant s \leqslant t }  \left[ \vert B_{s} \vert^4   + \left( \frac{1}{2} \int_u^s \omega \left(B_v, dB_v \right) \right)^2 \right] + A_u^2 + \vert A_u \vert \max_{u \leqslant s \leqslant t }  \vert  A_{s} - A_u \vert  \\
& \leqslant \max_{u \leqslant s \leqslant t }  \left[ \vert B_{s} \vert^4   + \left( \frac{1}{2} \int_u^s \omega \left(B_v, dB_v \right) \right)^2 \right]  + 2A_u^2 + \vert  A_u \vert  \max_{0 \leqslant s \leqslant t }  \vert  A_{s} \vert,
\end{align*}
and the upper bound in \eqref{e.idk2} is proven. Let us now show the lower bound. We have that
\begin{align*}
& \max_{u \leqslant s \leqslant t }  \left[ \vert B_{s} \vert^4   + \left( \frac{1}{2} \int_u^s \omega \left(B_v, dB_v \right) \right)^2 \right] =  \max_{u \leqslant s \leqslant t }  \left[ \vert B_{s} \vert^4   + \left( A_{s}-A_u \right)^2 \right]
\\
& =  \max_{u \leqslant s \leqslant t }  \left[ \vert B_{s} \vert^4   + A_{s}^2 +A_u^2 -2A_uA_{s} \right] \leqslant \max_{u \leqslant s \leqslant t }  \vert g_{s} \vert^4 + A_u^2 + 2 \max_{u \leqslant s \leqslant t } \left( -A_u A_{s} \right) \\
& \leqslant \max_{u \leqslant s \leqslant t }  \vert g_{s} \vert^4 + A_u^2 + 2 \vert A_u \vert \max_{0\leqslant s \leqslant t } \vert A_{s} \vert,
\end{align*}
and the lower bound is also proven.

Before finishing the proof we recall that by \cite[Theorem 1]{Remillard1994}

\[
\liminf_{t\rightarrow \infty } \phi(t)^2  \max_{0\leqslant s \leqslant t} \vert A_{s} \vert = \frac{\pi}{4} \text{ a.s. }
\]
Then \eqref{e.01law} follows by \eqref{e.idk2} and the fact that $\lim_{t\rightarrow \infty }\phi(t)=0$.
\end{proof}

We have actually proven a more quantitative version of Theorem \ref{t.chunglil} as follows.

\begin{theorem}[Chung's law of iterated logarithm with bounds]\label{t.ChungBounds}
The constant $c= \liminf_{t\rightarrow \infty} \phi (t) g^{\ast}_{t}$ in Theorem \ref{t.chunglil} satisfies
\[
\sqrt{\lambda_1^{(2)}} \leqslant c \leqslant \sqrt{c\left( \lambda_{1}^{(1)}, \lambda_{1}^{(2)} \right)},
\]
where $\lambda_1^{(2)}$ is defined in Notation \ref{n.eigenvalues}, and $c\left( \lambda_{1}^{(1)}, \lambda_{1}^{(2)} \right)$ is defined in Proposition \ref{p.smalldeviations.estimates}.
\end{theorem}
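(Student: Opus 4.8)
The plan is essentially to assemble Theorem \ref{t.ChungBounds} from pieces that have already been established in the excerpt, since it is stated as a more quantitative restatement of Theorem \ref{t.chunglil}. The constant $c=\liminf_{t\to\infty}\phi(t)g^\ast_t$ is shown to be almost surely constant by Proposition \ref{p.zero-one.law}, so it makes sense to speak of numerical bounds on it. The lower bound $c\geqslant\sqrt{\lambda_1^{(2)}}$ is exactly the content of Proposition \ref{p.chunglil.lowerbound}, and the upper bound $c\leqslant\sqrt{c(\lambda_1^{(1)},\lambda_1^{(2)})}$ is exactly Proposition \ref{p.chunglil.upperbound}. So the proof is a two-line citation of these three results.

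Concretely, first I would invoke Proposition \ref{p.zero-one.law} to note that $c$ is a deterministic constant, so the inequalities below are statements about a real number rather than almost-sure statements about a random variable. Then I would cite Proposition \ref{p.chunglil.lowerbound} for $c\geqslant\sqrt{\lambda_1^{(2)}}$ and Proposition \ref{p.chunglil.upperbound} for $c\leqslant\sqrt{c(\lambda_1^{(1)},\lambda_1^{(2)})}$, with $\lambda_1^{(1)},\lambda_1^{(2)}$ as in Notation \ref{n.eigenvalues} and $c(\lambda_1^{(1)},\lambda_1^{(2)})$ as defined in Proposition \ref{p.smalldeviations.estimates}. Chaining these gives the displayed double inequality. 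There is genuinely no new argument to run here: all of the work is in the two one-sided bounds, which rely in turn on the scaling property of $g_t$ (Remark \ref{r.scaling}), the preliminary Borel--Cantelli estimates of Proposition \ref{p.general}, the eigenvalue bounds of Proposition \ref{p.smalldeviations.estimates}, and R\'emillard's law of iterated logarithm for L\'evy area.

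There is no real obstacle; the only thing to be careful about is making sure the objects named in the statement match those proven: that $c$ in Theorem \ref{t.ChungBounds} is the same random variable as in \eqref{e.chunglil}, which is already constant a.s., and that the function $c(\lambda_1^{(1)},\lambda_1^{(2)})=f(x^\ast)=\inf_{x\in(0,1)}f(x)$ is positive and finite (this is remarked inside the proof of Proposition \ref{p.smalldeviations.estimates}), so that $\sqrt{c(\lambda_1^{(1)},\lambda_1^{(2)})}$ makes sense and is finite. Thus the proof I would write is simply:

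\begin{proof}
By Proposition \ref{p.zero-one.law} the random variable $c=\liminf_{t\to\infty}\phi(t)g^\ast_t$ is almost surely equal to a deterministic constant, so the assertion is a pair of numerical inequalities. The lower bound $c\geqslant\sqrt{\lambda_1^{(2)}}$ is Proposition \ref{p.chunglil.lowerbound}, and the upper bound $c\leqslant\sqrt{c(\lambda_1^{(1)},\lambda_1^{(2)})}$ is Proposition \ref{p.chunglil.upperbound}, with $\lambda_1^{(1)},\lambda_1^{(2)}$ as in Notation \ref{n.eigenvalues} and $c(\lambda_1^{(1)},\lambda_1^{(2)})$ as in Proposition \ref{p.smalldeviations.estimates}. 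Combining the two bounds yields
\[
\sqrt{\lambda_1^{(2)}}\leqslant c\leqslant\sqrt{c(\lambda_1^{(1)},\lambda_1^{(2)})},
\]
as claimed.
\end{proof}
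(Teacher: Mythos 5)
Your proposal is correct and matches the paper exactly: the paper states Theorem \ref{t.ChungBounds} with the remark that it has ``actually been proven'' already, since it is just the combination of Proposition \ref{p.chunglil.lowerbound} (lower bound), Proposition \ref{p.chunglil.upperbound} (upper bound), and Proposition \ref{p.zero-one.law} (the constant is deterministic). Your care in noting that $c\left( \lambda_{1}^{(1)}, \lambda_{1}^{(2)} \right)$ is positive and finite is a reasonable, if minor, addition.
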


\subsection{Small deviations for $g_{t}$ }
We are now ready to prove Theorem \ref{t.smalldeviations}, that is, the small deviation principle for the hypoelliptic Brownian motion $g_{t}$.

\begin{proof}[Proof  of Theorem \ref{t.smalldeviations}]
We recall the notation
\begin{align*}
& c_{-}:= \liminf_{\varepsilon \rightarrow 0 } -  \varepsilon^2 \log \Prob \left( g^{\ast}_{1} < \varepsilon \right),
\\
& c_{+}:= \limsup_{\varepsilon \rightarrow 0 }  - \varepsilon^2 \log \Prob \left( g^{\ast}_{1} < \varepsilon \right),
\end{align*}
and
\begin{align*}
&c:= \liminf_{t\rightarrow \infty} \phi (t) g^{\ast}_{t}.
\end{align*}
We first show that
\begin{equation}\label{e.firststep}
c_{+} \leqslant c^2 .
\end{equation}
Let $k\in \left( 0, c_{+} \right)$ be a fixed number, that is, $ k \leqslant \limsup_{\varepsilon \rightarrow 0} - \varepsilon^2 \log \Prob \left( g^{\ast}_{1} < \varepsilon \right)$. This means that there exists an $\varepsilon (k)$ such that
\begin{equation}\label{e.upperbound}
\Prob \left( g^{\ast}_{1} < \varepsilon \right) \leqslant  \exp \left( -\frac{k}{\varepsilon^2} \right),
\end{equation}
 for any $\varepsilon \leqslant \varepsilon (k)$.

Now fix $R>1$ and a number $\gamma $ such that $ 0 < R \gamma < k$. Define $t_n=R^n>1$, and $\varepsilon_n = \varepsilon_n(k, \gamma, R) := \frac{\sqrt{\gamma}}{\phi \left( t_{n+1} \right) \sqrt{t_n}}$. Note that $\varepsilon_n$ goes to zero as $n$ goes to infinity, and hence there exists an $N= N(k, \gamma, R )$ such that $\varepsilon_n \leqslant \varepsilon(k) $ for any $n \geqslant N(k, \gamma , R)$. Then we have that
\begin{align*}
&\Prob \left( g^{\ast}_{t_n} < \frac{\sqrt{\gamma}}{ \phi \left( t_{n+1} \right) } \right) = \Prob\left( g^{\ast}_{1} < \frac{\sqrt{\gamma} }{\phi \left( t_{n+1} \right)  \sqrt{t_n}} \right)  = \Prob\left( g^{\ast}_{1} < \varepsilon_n \right)
\\
& \leqslant   \exp \left( -\frac{k}{\varepsilon_n^2} \right) = \exp\left( -\frac{k}{\gamma}  \phi \left( t_{n+1} \right) ^2 t_n  \right) = \exp \left( -\frac{k}{\gamma} \frac{t_n}{t_{n+1}} \log \log t_{n+1}  \right)
\\
&  = \exp\left(-\frac{k}{R \gamma} \log \log R^{n+1} \right) =  \left( \frac{1}{ ( n+1) \log R} \right)^{\frac{k  }{R\gamma}},
\end{align*}
for all $n \geqslant N(k, \gamma , R)$, which is  a term of a convergent series since $R \gamma < k$.  Therefore for any $ 0 < k < c_{+}$, $R>1$, $0< R \gamma < k$, and $t_n=R^n$ we have that
\begin{align*}
& \sum_{n=1}^\infty \Prob \left( g^\ast_{t_n} < \frac{\sqrt{\gamma}}{ \phi \left( t_{n+1} \right) } \right)  < \infty.
\end{align*}
Hence by the Borel-Cantelli Lemma we have
\begin{align*}
& \Prob \left( \bigcap_{k \geqslant 1} \bigcup_{n \geqslant k}  \left\{ g^\ast_{t_n} < \frac{\sqrt{\gamma}}{ \phi \left( t_{n+1} \right) }  \right\} \right) = 0, \; \; \text{that is,}
\\
&\Prob \left( \bigcup_{k \geqslant 1} \bigcap_{n \geqslant k}  \left\{ g^\ast_{t_n} \geqslant \frac{\sqrt{\gamma}}{ \phi \left( t_{n+1} \right) }  \right\} \right) = 1.
\end{align*}
Therefore almost surely for all large $n$,  $g^\ast_{ t_n }  \geqslant \frac{ \sqrt{\gamma} } {\phi \left( t_{n+1} \right)}$. The function $\phi (t) $ is decreasing for $t>1$, therefore $t\in \left[ t_n , t_{n+1} \right]$  we have
\[
g^\ast_{t} \geqslant  g^\ast_{ t_n } \geqslant \frac{ \sqrt{ \gamma } } {\phi \left( t_{n+1} \right)} \geqslant  \frac{\sqrt{ \gamma }} {\phi \left( t \right)}
\]
which yields

\[
c:= \liminf_{t\rightarrow \infty} \phi (t) g^{\ast}_{t} \geqslant  \sqrt{ \gamma  } \quad \text{a.s.}
\]
for any $ \gamma < \frac{k}{R} <\frac{c_{+}}{R} $ , and hence  \eqref{e.firststep} is proven by letting first $R$ go to $1$, and then $k$ to $c_{+}$.

Let us now show that
\begin{equation}\label{e.secondstep}
c^2 \leqslant c_{-}.
\end{equation}
Suppose $ k> c_{-}$, then $k \geqslant \liminf_{\varepsilon \rightarrow0} - \varepsilon^2 \log \Prob \left( g^{\ast}_{1} < \varepsilon \right)$. Therefore there exists an $\varepsilon^\prime (k)$ such that
\begin{equation}\label{e.lowerbound}
\Prob \left( g^{\ast}_{1} < \varepsilon \right) \geqslant  \exp \left( -\frac{k}{\varepsilon^2} \right),
\end{equation}
for any $\varepsilon \leqslant \varepsilon^\prime (k)$. Now set $t_n =n^n$ and define
\begin{align*}
&\varepsilon_n= \varepsilon_n(k) := \frac{k}{\sqrt{c_{-}}} \frac{1}{ \sqrt{t_n - t_{n-1}} \phi \left( t_n \right)},
\\
&E_n^k := \left\{ \max_{t_{n-1} \leqslant s \leqslant t_n} \vert g_{t_{n-1}}^{-1} g_{s}  \vert< \frac{k}{\sqrt{c_{-}}} \frac{1}{ \phi \left( t_n \right)} \right\} .
\end{align*}
Note that $\varepsilon_n$ goes to zero as $n$ goes to infinity, and hence there exists an $N(k)$ such that $\varepsilon_n \leqslant \varepsilon^\prime (k)$ for any $ n\geqslant N(k)$. We claim that, for any $k>c_{-}$, $\sum_{n=1}^\infty \Prob \left( E^k_n \right) = \infty$. Indeed, since by \eqref{e.LeftIncrement} the left increment $g_{t_{n-1}}^{-1} g_{s+t_{n-1}}  \stackrel{(d)}{=} g_{t}$, we can use \eqref{e.lowerbound}  to see that  for $n \geqslant N(k)$
\begin{align*}
& \Prob\left( E_n^k \right) = \Prob \left( \max_{t_{n-1} \leqslant s \leqslant t_n} \vert g_{t_{n-1}}^{-1} g_{s}  \vert < \frac{k}{\sqrt{c_{-}}} \frac{1}{ \phi \left( t_n \right)} \right)
\\
&  =\Prob \left( \max_{0 \leqslant s \leqslant t_n - t_{n-1}}  \vert g_{t_{n-1}}^{-1} g_{s+t_{n-1}}  \vert <  \frac{k}{\sqrt{c_{-}}} \frac{1}{ \phi \left( t_n \right)} \right) = \Prob \left( \max_{0 \leqslant s \leqslant t_n - t_{n-1}}  \vert g_{s} \vert <  \frac{k}{\sqrt{c_{-}}} \frac{1}{ \phi \left( t_n \right)} \right)
\\
&  =\Prob \left( g^{\ast}_{1} < \varepsilon_n  \right) \geqslant  \exp \left( -\frac{k}{\varepsilon_n^2}\right) = \exp \left( - \frac{c_{-}}{k} \frac{t_n - t_{n-1}}{t_n}  \log \log t_n \right)
\\
&  \geqslant \exp \left( - \frac{c_{-}}{k}  \log \log t_n \right) = \left(  \frac{1}{n \log n} \right)^{ \frac{c_{-}}{k}}.
\end{align*}
This yields  $\sum_{n=1}^\infty \Prob \left( E^k_n \right) = \infty$ since $ k>c_{-}$. Note that the events $E_n$ are independent because the increments are independent and
 \[
 \max_{t_{n-1} \leqslant s \leqslant t_n} \vert g_{t_{n-1}}^{-1} g_{s}  \vert = \max_{0 \leqslant s \leqslant t_n - t_{n-1}}  \vert  g_{t_{n-1}}^{-1} g_{s+t_{n-1}} \vert
 \]
 and  $g_{t_{n-1}}^{-1} g_{s+t_{n-1}}$ is independent of $\mathcal{F}_{t_{n-1}}$ as shown in the proof of Proposition  \ref{p.chunglil.upperbound}. Hence by the Borel-Cantelli Lemma  we have that

 \[
 \Prob \left( \limsup_{n\rightarrow \infty} E_n^k \right)= \Prob \left( \bigcap_{j \geqslant 1 } \bigcup_{n \geqslant j}  \left\{ \phi\left(t_n \right) \max_{t_{n-1} \leqslant s \leqslant t_n} \vert g_{t_{n-1}}^{-1} g_{s}  \vert< \frac{k}{\sqrt{c_{-}}} \right\} \right)=1,
 \]
 which yields
 \[
 \liminf_{n\rightarrow \infty} \phi\left(t_n \right) \max_{t_{n-1} \leqslant s \leqslant t_n} \vert g_{t_{n-1}}^{-1}   g_{s} \vert< \frac{k}{\sqrt{c_{-}}}  \quad \text{ a.s. for all} \;\; k>c_{-},
 \]
and hence
\begin{equation}\label{e.almost.there}
 \liminf_{n\rightarrow \infty} \phi\left(t_n \right) \max_{t_{n-1} \leqslant s \leqslant t_n} \vert g_{t_{n-1}}^{-1} g_{s}  \vert \leqslant \sqrt{c_{-}} \quad \text{ a.s.}
\end{equation}
We will show later in the proof that
\begin{equation}\label{e.assumption}
\lim_{n\rightarrow \infty} \phi \left( t_n\right) g^\ast_{ t_{n-1} }= 0 \quad \text{a.s.}
\end{equation}
Assume \eqref{e.assumption} for now, we can use \eqref{e.triangular.ineq} to see that
\begin{align*}
& \phi \left(t_n \right) g^\ast_{t_n } \leqslant  \phi \left(t_n \right) g^\ast_{ t_{n-1} }  + \phi \left( t_n \right) \max_{t_{n-1} \leqslant s\leqslant t_n} \vert g_{s} \vert
 \\
& \leqslant  \phi \left(t_n \right) g^\ast_{ t_{n-1} }  +  \phi \left( t_n \right) \max_{t_{n-1} \leqslant s\leqslant t_n} \vert g_{t_{n-1}}^{-1} g_{s}  \vert + \phi \left( t_n \right) \vert g_{t_{n-1}} \vert
 \\
 & \leqslant  2  \phi \left(t_n \right) g^\ast_{ t_{n-1} } +  \phi \left( t_n \right) \max_{t_{n-1} \leqslant s\leqslant t_n} \vert g_{t_{n-1}}^{-1}  g_{s} \vert.
\end{align*}
Therefore by \eqref{e.almost.there} and \eqref{e.assumption} we have that

\begin{align*}
& c:=  \liminf_{t\rightarrow \infty} \phi (t) g^{\ast}_{t} \leqslant  \liminf_{n\rightarrow \infty} \phi \left( t_n \right) g^\ast_{ t_n }
\\
& \leqslant  \liminf_{n\rightarrow \infty}  \left( 2  \phi \left(t_n \right) g^\ast_{t_{n-1} }  +  \phi \left( t_n \right) \max_{t_{n-1} \leqslant s\leqslant t_n} \vert g_{s} g_{t_{n-1}}^{-1} \vert \right)  \leqslant \sqrt{c_{-}} \quad \text{ a.s.},
\end{align*}
which proves \eqref{e.secondstep}. Let us now show  \eqref{e.assumption}. By Lemma \ref{l.techincal} we have that for any $\varepsilon >0$
\begin{align*}
& 1= \Prob \left( \bigcup_{k \geqslant 1 } \bigcap_{n \geqslant k}  \left\{ \phi\left( t_n \right) g^\ast_{ t_{n-1} } < \varepsilon \right\}  \right) \leqslant \Prob \left(\limsup_{n\rightarrow \infty }  \phi\left( t_n \right) g^\ast_{ t_{n-1} }< \varepsilon  \right) .
\end{align*}
So for every $\varepsilon$ we have that
\[
\limsup_{n\rightarrow \infty }  \phi\left( t_n \right) g^\ast_{ t_{n-1} } < \varepsilon \quad \text{ a.s.,}
\]
and hence $\limsup_{n\rightarrow \infty }  \phi\left( t_n \right) g^\ast_{ t_{n-1} } =0$ a.s., which implies \eqref{e.assumption}.
\end{proof}

\providecommand{\bysame}{\leavevmode\hbox to3em{\hrulefill}\thinspace}
\providecommand{\MR}{\relax\ifhmode\unskip\space\fi MR }
\providecommand{\MRhref}[2]{%
  \href{http://www.ams.org/mathscinet-getitem?mr=#1}{#2}
}
\providecommand{\href}[2]{#2}

\end{document}